\newtheorem{theorem}{Theorem}
\numberwithin{theorem}{section}
\newtheorem{proposition}[theorem]{Proposition}
\newtheorem{lemma}[theorem]{Lemma}
\newtheorem{corollary}[theorem]{Corollary}
\theoremstyle{definition}
\newtheorem{definition}[theorem]{Definition}
\newtheorem{example}[theorem]{Example}
\newtheorem{remark}[theorem]{Remark}
\crefname{question}{question}{questions}
\DeclareMathOperator{\arrank}{ar-rk}
\DeclareMathOperator{\NN}{\mathbb{N}}
\DeclareMathOperator{\RR}{\mathbb{R}}
\DeclareMathOperator{\CC}{\mathbb{C}}
\DeclareMathOperator{\PP}{\mathbb{P}}
\DeclareMathOperator{\K}{\mathcal{K}}
\DeclareMathOperator{\rk}{rk}
\DeclareMathOperator{\rank}{rank}
\DeclareMathOperator{\rrank}{\RR-rk}
\DeclareMathOperator{\crank}{\CC-rk}
\DeclareMathOperator{\rbrank}{\RR-b.rk}
\DeclareMathOperator{\cbrank}{\CC-b.rk}
\DeclareMathOperator{\codim}{codim}
\DeclareMathOperator{\Sym}{Sym}
\DeclareMathOperator{\diag}{diag}
\DeclareMathOperator{\Span}{span}
\renewcommand{\Re}{\operatorname{Re}}
\renewcommand{\Im}{\operatorname{Im}}
\newcommand{\bc}{\textbf{c}}
\newcommand*{\DashedArrow}[1][]{\mathbin{\tikz [baseline=-0.25ex,-latex, dashed,#1] \draw [#1] (0pt,0.5ex) -- (1.3em,0.5ex);}}%
\title{Sums of Squares, Hankel index and Almost Real Rank}
\author{Grigoriy Blekherman, Justin Chen, Jaewoo Jung}
\address{School of Mathematics, Georgia Institute of Technology,
Atlanta, Georgia}
\email{\{greg, justin.chen\}@math.gatech.edu}
\email{jaewoojung@gatech.edu}
\thanks{Grigoriy Blekherman and Jaewoo Jung were partially supported by NSF grant DMS-1901950.}
\subjclass[2010]{14P99, 14H99, 52A20}
\keywords{sums of squares, spectrahedron, Waring rank, Hankel index, almost real rank}
\begin{document}

\maketitle

\begin{abstract}
The Hankel index of a real variety $X$ is an invariant that quantifies the difference between nonnegative quadrics and sums of squares on $X$.
In \cite{MR3633773} the authors proved an intriguing bound on the Hankel index in terms of the Green-Lazarsfeld index, which measures the ``linearity" of the minimal free resolution of the ideal of $X$.
In all previously known cases this bound was tight.
We provide the first class of examples where
the bound is not tight; in fact the difference between Hankel index and Green-Lazarsfeld index can be arbitrarily large.
Our examples are outer projections of rational normal curves, where we identify the center of projection with a binary form $F$.
The Green-Lazarsfeld index of the projected curve is given by the complex Waring border rank of $F$ \cite{MR2299577}.
We show that the Hankel index is given by the \emph{almost real} rank of $F$, which is a new notion that comes from decomposing $F$ as a sum of powers of almost real forms.
Finally, we determine the range of possible and typical almost real ranks for binary forms.
\end{abstract}

\section{Introduction}
The relationship between nonnegative polynomials and sums of squares is a fundamental topic in real algebraic geometry.
This subject has received renewed attention in the last twenty years due to its connection with polynomial optimization and many applications \cite{MR3075433}.
In a foundational paper Hilbert described all the cases in terms of degree and number of variables where any globally nonnegative polynomial can be written as a sum of squares of polynomials \cite{MR1510517}.
A modern approach to this question is to study nonnegative polynomials and sums of squares on a real projective variety $X\subseteq \PP^n_{\RR}$.
This allows one to restrict to quadrics, since degree $2d$ forms on $X$ are quadrics on the $d$-th Veronese embedding of $X$.
The two main objects of interest are:
\begin{align*}
    P_X &:= \left\{ f \in R(X)_2 \mid f(x) \ge 0 \,\, \text{for all} \,\, x \in X(\RR) \right\}, \\
    \Sigma_X &:= \left\{ f \in R(X)_2 \mid \,\, \text{there exist} \,\,  l_1, \ldots, l_m \in R(X)_1, f = \sum_{i=1}^m l_i^2 \right\}
\end{align*}
In fact $\Sigma_X \subseteq P_X$ are convex cones in the vector space $R(X)_2$ of all quadrics on $X$, which facilitates their study via convex geometry (cf. \cite{MR3075433}).
For instance, as an extension of Hilbert's result, \cite[Theorem 1.1]{MR3486176} showed that $\Sigma_X = P_X$ if and only if $X$ is a variety of minimal degree, i.e. $\deg X = 1 + \codim X$.
However, the structure of these cones is still not well understood in general.

It is sometimes more convenient to work with the dual cones $P_X^\star \subseteq \Sigma_X^\star$.
The cone $\Sigma_X^\star$ is a \emph{spectrahedron}, i.e. a slice of the cone of positive semidefinite (PSD) matrices with a linear subspace.
We call $\Sigma_X^\star$ the \emph{Hankel spectrahedron} of $X$.
By identifying a point $\ell$ in $\Sigma_X^\star$ with a PSD matrix we can talk about the rank of $\ell$.
Rank one extreme rays of $\Sigma_X^*$ are precisely the extreme rays of $P_X^\star$.
Therefore, if $P_X^*\subsetneq \Sigma_X^*$
we can quantitatively measure the difference between these cones by analyzing the ranks of extreme rays of $\Sigma_X^*$ that are greater than one.
This motivates the following key definition:

\begin{definition}(cf. \cite[Definition 1]{MR3633773})
The \emph{Hankel index} of $X$, denoted $\eta(X)$, is defined to be the minimal rank of a(n extreme) ray $\ell \in \Sigma_X^\star \setminus P_X^\star$, or $\infty$ if $\Sigma_X^\star = P_X^\star$.
\end{definition}

The Hankel index is a subtle invariant which is often quite hard to compute.
A surprising connection between the Hankel index and homological properties of the minimal free resolution of the ideal of $X$ was found in \cite[Theorem 4 and Theorem 6]{MR3633773}: namely, there is a lower bound $\eta(X) \ge \alpha(X) + 1$, where $\alpha(X)$ is the Green-Lazarsfeld index of $X$ (here $X$ need not be irreducible).
Recall that the Green-Lazarsfeld index of $X$ is defined as follows: $\alpha(X)=0$ if the ideal of $X$ is not generated by quadrics; otherwise it is equal to one plus the number of steps that the minimal free resolution of the coordinate ring of $X$ is linear, i.e. has only linear syzygies.
In all cases where the Hankel index was known, this bound was tight.
These cases include varieties of minimal degree, arithmetically Cohen-Macaulay (ACM) varieties of almost minimal degree, varieties defined by quadratic squarefree monomial ideals, some general canonical curves, and Veronese embeddings of $\PP^2$ (see \cite[Theorem 28]{MR3633773} and \cite{MR3550352}).

We present the first examples where the difference between Hankel index and Green-Lazarsfeld index is larger than one.
The Hankel index of $X$ is a semialgebraic invariant, while the Green-Lazarsfeld index is an algebraic invariant which makes no distinction between the real and complex points of $X$.
Nevertheless separating these two invariants is challenging.
To accomplish this we consider non-ACM curves of almost minimal degree.
This class of curves is well-studied: such curves admit a description as an outer projection of a rational normal curve, and are thus determined by a single point, namely the projection center \cite{MR2274517}.
Since we are working with the rational normal curve, we can identify the projection center $p$ with a binary form $F(p)$ (cf. \Cref{ssec:pointsToForms}).
In this case, both Green-Lazarsfeld and Hankel index are intimately connected to another classical notion: \emph{Waring decomposition} of $F(p)$, i.e. shortest decomposition of $F(p)$ as a sum of powers of linear forms.
In \cite[Theorem 1.1(2)]{MR2299577} it was shown that for such curves, the Green-Lazarsfeld index equals the complex Waring border rank of $F(p)$ minus 3: $\alpha(X) = \cbrank(F(p)) - 3$.
The Hankel index of $X$ is determined by the shortest decomposition of $F(p)$ into as a sum of powers of \emph{almost real forms}, (cf. \Cref{ssec:almostRealDef} for precise definitions), which we call the \emph{almost real rank} of $F(p)$.

\begin{theorem} \label{thm:mainThm}
Let $X = \pi_p(C_d)$ be a projection of a rational normal curve $C_d$ of degree $d$ away from a point $p \in \PP^d \setminus C_d^3$, with corresponding binary form $F(p) \in \RR[x,y]_d$.
Then the Hankel index of $X$ is given by 
\[
\eta(X) = \arrank(F(p)) - 2.
\]
\end{theorem}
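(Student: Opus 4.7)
The strategy is to translate $\Sigma_X^\star$ into binary-form data via apolarity, and then connect the extreme rays of $\Sigma_X^\star \setminus P_X^\star$ to almost real decompositions of $F(p)$. Under the apolar pairing, $R(X)_1$ identifies with the hyperplane $V := F(p)^\perp \subseteq \RR[s,t]_d$, so a point $\ell \in \Sigma_X^\star$ of rank $r$ corresponds to a PSD symmetric bilinear form $H_\ell$ on $V$ of rank $r$ --- the restriction to $V \times V$ of the catalecticant of some binary form in $\RR[s,t]_{2d}$. Meanwhile $P_X^\star$ is picked out by those $\ell$ arising from positive measures on $X(\RR) \cong \PP^1(\RR)$, equivalently those $H_\ell$ that admit a rank-preserving PSD extension to all of $\RR[s,t]_d$ (solvability of the truncated real moment problem).

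For the upper bound $\eta(X) \leq \arrank(F(p))-2$, I would start from an almost real decomposition $F(p) = \sum_{i=1}^{s} c_i L_i^d$ with $s = \arrank(F(p))$ and build an explicit functional $\ell(f) := \sum_i \tilde c_i f(L_i)$ on $\RR[s,t]_{2d}$ for carefully chosen scalars $\tilde c_i$ (real or in conjugate pairs). The identity $\sum_i c_i\, g(L_i) = 0$ for $g \in V$ (which follows from $F(p) = \sum c_i L_i^d$ via differentiation) makes $\ell$ descend to $R(X)_2$, while the almost-realness of the $L_i$ yields $\ell(g^2) \geq 0$ on $V$ via a Cauchy--Schwarz type estimate. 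A dimension count against this same constraint gives $\rank H_\ell = s-2$, and any purely real measure representation would force all $L_i$ to be real, so $\ell \notin P_X^\star$.

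For the lower bound $\eta(X) \geq \arrank(F(p))-2$, take an extreme ray $\ell \in \Sigma_X^\star \setminus P_X^\star$ of rank $r$ and extend it to a functional $\tilde\ell$ on $\RR[s,t]_{2d}$ (a one-real-parameter family of extensions). For a suitable choice of the parameter, the Hankel/catalecticant of $\tilde\ell$ has rank $r+2$ and admits a complex decomposition $\sum_{j=1}^{r+2} \gamma_j M_j^{2d}$; an apolar identity then yields a decomposition $F(p) = \sum_{j=1}^{r+2} \gamma'_j M_j^d$ of length $r+2$. Extremality of $\ell$, combined with $\ell \notin P_X^\star$, forces the atoms $M_j$ to satisfy the almost real condition of \Cref{ssec:almostRealDef}, giving $\arrank(F(p)) \leq r+2$.

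The main obstacle is this last implication: promoting the decomposition from complex to almost real. Without further information the apolar machinery only delivers $\cbrank(F(p)) \leq r+2$, which is exactly the previously known bound $\eta(X) \geq \alpha(X)+1$ from \cite{MR3633773} combined with \cite{MR2299577}. The upgrade uses the finer fact that $\ell$ is required to be PSD on the hyperplane $V$ rather than on all of $\RR[s,t]_d$: this weaker condition allows $H_{\tilde\ell}$ to have complex support away from $\PP^1(\RR)$, but the interaction between the hyperplane constraint and complex conjugation pins these complex atoms to a specific locus --- precisely the one that the notion of almost real form is designed to capture. Calibrating this correspondence so as to obtain sharp matching bounds is the technical heart of the theorem.
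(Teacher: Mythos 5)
Your overall framework is the correct one --- translate via apolarity, identify $R(X)_1$ with the hyperplane $F(p)^\perp_d \subseteq \RR[x,y]_d$, and study quadratic forms $q_\ell$ obtained by restriction --- and this matches the paper's setup in Sections 3--6. However, both directions of your sketch have gaps at exactly the places where the real work happens.

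For the upper bound, an almost real decomposition need not consist only of real and complex-conjugate linear forms: by definition (Definition 2.8) the degree-$r$ form $g \in (F(p))^\perp$ with almost real roots may have one \emph{double} real root. In that case there is no decomposition $F(p) = \sum c_i L_i^d$ with distinct $L_i$; instead $F(p) = \sum_{i \le r-2} c_i (l_i)_\perp^d + c_{r-1}(l_r)_\perp^d + c_r\, l_r (l_r)_\perp^{d-1}$, and the constructed functional uses evaluation at a non-reduced zero-dimensional scheme (evaluation plus differentiation at the double point). The paper treats this as a separate case (Section 5.3) and explicitly remarks that this non-reduced construction is both new and necessary, so a proof that only contemplates linear forms $L_i$ cannot produce rays of minimal rank for all $F(p)$. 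You also appeal to a ``Cauchy--Schwarz type estimate'' for PSD-ness; the paper instead verifies PSD-ness and rank via an explicit change of basis (indicator functions at the points of the support), Cauchy interlacing, and Corollary 3.13 (Lorentz signature of $Q_\ell$ restricts to a PSD form of rank exactly two less). This isn't just cosmetic --- the explicit basis is also what yields a kernel vector with all-nonzero entries, which is how one shows $\ker q_\ell$ is basepoint-free and hence $\ell \notin P_X^\star$; your appeal to ``any purely real measure representation would force all $L_i$ real'' is not how non-membership in $P_X^\star$ is established.

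For the lower bound, you correctly locate the difficulty but the mechanism you gesture at is not the paper's, and as stated it does not close the gap. Extracting a complex rank-$(r+2)$ decomposition $\sum \gamma_j M_j^{2d}$ from $L$ and then asserting that ``extremality of $\ell$ forces the atoms $M_j$ to satisfy the almost real condition'' is precisely what you admit you cannot justify; indeed nothing about extremality directly constrains the location of the $M_j$. The paper's argument is quite different: it never decomposes $L$ into powers. Instead it shows (via Cauchy interlacing plus the basepoint-free hypothesis) that $Q_\ell$ has Lorentz signature, and then runs a \emph{perturbation} argument directly on the generator $L_\perp$ of $(L)^\perp$: if $L_\perp$ had two pairs of non-simple-real roots, one could perturb $L_\perp$ (and hence $L$, and hence $Q_\ell$) so that the nearby forms have at least two pairs of simple complex roots, forcing at least two negative eigenvalues of $Q_\ell$ by continuity of eigenvalues and constancy of $\dim\ker$, contradicting Lorentz signature. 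A separate argument rules out a triple root (by producing a vanishing last column of $\widetilde{q_\ell}$, hence a basepoint), and a further argument (comparing $(L_\perp)_d \cap H$ with $(\widetilde{L_\perp})_d \cap H$ where $\widetilde{L_\perp}$ is the reduction-of-order polynomial) shows $L_\perp \in (F(p))^\perp$. These are the concrete steps your proposal is missing; without them one only recovers the known bound $\eta(X) \ge \cbrank(F(p)) - 2 = \alpha(X) + 1$.
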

\noindent
This theorem elucidates the semialgebraic nature of the Hankel index, and demonstrates two ways in which it differs from the Green-Lazarsfeld index: the difference between rank and border rank, and the difference between almost real decompositions and complex decompositions.

We note an interesting technical detail of the proof of Theorem~\ref{thm:mainThm}.
To prove an upper bound on Hankel index we need a construction of rays in $\Sigma_X^*\setminus P_X^*$, and for this we use point evaluations at points of $X$ in special position.
Such constructions using Cayley-Bacharach relations were used in \cite{blekherman2012nonnegative} and more generally in \cite{MR3486176} (the idea goes all the way to Hilbert's original proof).
Until now these construction only used reduced points of $X$, but in this paper we use non-reduced $0$-dimensional subschemes of $X$.
The use of such non-reduced configurations is necessary, and cannot be replicated by reduced points.

Real and complex Waring decomposition of binary forms is a classical subject dating back to Sylvester \cite{sylvester1904remarkable,sylvester1865syllabus}.
The notion of almost real rank is new and we prove some basic results about almost real rank of binary forms.
We show that the maximal almost real rank for degree $d$ forms is $d-1$, and classify all forms of maximal almost real rank (Theorem \ref{thm:maxARrankChar}).
We also show that the range of typical almost real ranks $r$ is precisely $\lfloor\frac{d+2}{2}\rfloor \le r \le d-2$ (\Cref{thm:typr}).

We outline the paper as follows: Sections \ref{sec:apolarRanks} and \ref{sec:setup2} introduce necessary background and setup, including the notion of almost real rank. Section \ref{sec:monomial} consists of a small explicit example illustrating construction techniques presented in Section \ref{sec:construction}. Sections \ref{sec:construction} and \ref{sec:lowerBound} constitute the proof of \Cref{thm:mainThm} (covering the inequalities ``$\le$'' and ``$\ge$'' respectively).
We conclude in \Cref{sec:almostRealRank} with an investigation of almost real rank for binary forms.

\section{Apolarity and Ranks} \label{sec:apolarRanks}

We begin with a brief review of apolarity and the apolar inner product, which is our preferred method of explicitly identifying primal and dual spaces.

\begin{definition} \label{def:apolar}
Let $k$ be a field of characteristic 0, and $R = k[x_1, \ldots, x_n]$ a polynomial ring over $k$.
Consider the ``differential'' pairing on $R$ defined by 
\begin{align} \label{eq:innerProduct}
    \langle f, g \rangle := \partial(f) \bullet g
\end{align}
where $\partial(f)$ is the differential operator obtained from $f$ by replacing each variable $x_i$ with $\frac{\partial}{\partial x_i}$, and $\bullet$ denotes the action of differential operators on polynomials.
For a given degree $d$, the pairing $\langle \cdot, \cdot \rangle$ restricts to an inner product on $R_d$, the $k$-vector space of forms of degree $d$.
For $F \in R$, the \emph{apolar ideal} of $F$ is defined as the orthogonal complement of $F$ with respect to the pairing (\ref{eq:innerProduct}), i.e.
\[
(F)^\perp := \{ f \in R \mid \langle f, F \rangle = 0 \}.
\]
If $F \in R_d$ is homogeneous, then $(F)^\perp$ is a homogeneous ideal.
\end{definition}

\begin{remark} \label{rem:apolar1}
For any form $F$, the apolar ideal $(F)^\perp$ is an Artinian Gorenstein graded ideal.
Conversely, every Artinian Gorenstein graded ideal $I$ is of the form $(F)^\perp$, where $F$ generates the socle of $R/I$.
\end{remark}

We now specialize to the case of binary forms, i.e. forms in 2 variables $x, y$.
Let $F \in k[x,y]_d$ be a binary form.
Then $(F)^\perp$ is Gorenstein of codimension $2$, hence is a complete intersection. 
As this fact will be used repeatedly in the sequel, we introduce some notation for the generators of this complete intersection:

\begin{definition} \label{def:apolarGens}
For $F \in k[x,y]_d$, let $F_\perp, F^\circ \in k[x,y]$ denote forms that satisfy
\begin{align*}
    (F)^\perp = (F_\perp, F^\circ)
\end{align*}
with $\deg F_\perp \le \deg F^\circ$.
If $d_1 := \deg F_\perp$ and $d_2 := \deg F^\circ$, we say that the apolar ideal $(F)^\perp$ is \emph{of type} $(d_1, d_2)$.
One always has the relation 
\begin{align} \label{eq:apolarDegRelation}
    d_1 + d_2 = \deg F + 2.
\end{align}
Note that if $d_1 < d_2$, then $F_\perp$ is uniquely defined by $F$ (up to nonzero scale), while $F^\circ$ is unique modulo the principal ideal $(F_\perp)$.

For example, if $l = ax + by \in k[x,y]_1$ is a binary linear form, then $(l)^\perp$ is of type $(1,2)$, with $l_\perp = bx - ay$, and $l^\circ$ is (the) quadric not in $(l_\perp)$.
\end{definition}


We are now ready to state the apolarity lemma for binary forms, which characterizes membership in the apolar ideal:

\begin{lemma}[Generalized Apolarity Lemma] cf.\cite[Lemma 1.31]{MR1735271} \label{lem:genApolarity}
Let $F \in k[x, y]_d$. For a given set $\{ l_1, \ldots, l_r \} \subseteq k[x, y]_1$ of linear forms and $d_1, \ldots, d_r \in \NN$ with $\sum_{i=1}^r d_i \le d$, one has $\prod_{i=1}^r l_i^{d_i} \in (F)^\perp$ if and only if there exist $c_{ij} \in k$ $(1 \le i \le r$, $0 \le j \le d_i-1)$ such that
\[
F = \sum_{i=1}^r \sum_{j=0}^{d_i-1} c_{ij} (l_i)^j(l_i)_\perp^{d-j}.
\]
\end{lemma}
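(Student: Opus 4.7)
The plan is to prove both directions of the biconditional separately: the reverse direction $(\Leftarrow)$ by a direct Leibniz-rule computation, and the forward direction $(\Rightarrow)$ by induction on $r$. For $(\Leftarrow)$, by linearity it suffices to check that $\partial\!\left(\prod_i l_i^{d_i}\right)\bullet\left((l_k)^j (l_k)_\perp^{d-j}\right) = 0$ for each index $k$ and each $0 \le j \le d_k - 1$. Since the commuting differential operators factor through $\partial(l_k)^{d_k}$, and since by definition $\partial(l_k) \bullet (l_k)_\perp = 0$, the Leibniz rule shows that each application of $\partial(l_k)$ only differentiates the $(l_k)^j$ factor; after $d_k > j$ applications, the result is zero.

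For $(\Rightarrow)$, I would induct on $r$. The base case $r = 1$ reduces, via a linear change of variables sending $l_1 \mapsto x$ (so $(l_1)_\perp$ becomes a scalar multiple of $y$), to the statement that $\partial_x^{d_1}F = 0$ for homogeneous $F \in k[x,y]_d$ forces $F$ to be supported on monomials $x^j y^{d-j}$ with $j < d_1$, which is immediate. For the inductive step with $r > 1$, set $G := \partial(l_1)^{d_1}\bullet F \in k[x,y]_{d - d_1}$; commutativity of differential operators gives $\prod_{i \ge 2} l_i^{d_i} \in (G)^\perp$, and the induction hypothesis yields a decomposition $G = \sum_{i=2}^r \sum_{j=0}^{d_i - 1} c'_{ij}\, (l_i)^j (l_i)_\perp^{d - d_1 - j}$.

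To lift this decomposition of $G$ back to $F$, define $V_i^{(m)} := \Span\{(l_i)^j (l_i)_\perp^{m - j} : 0 \le j \le d_i - 1\}$. The crux is to prove that $\partial(l_1)^{d_1}$ restricts to an \emph{isomorphism} $V_i^{(d)} \to V_i^{(d - d_1)}$ for each $i \ge 2$. Applying the Leibniz rule, together with the fact that both $\partial(l_1) \bullet l_i$ and $\partial(l_1) \bullet (l_i)_\perp$ are scalars, a direct computation expresses $\partial(l_1)^{d_1}$ as a lower-triangular matrix in the natural bases, with diagonal entries proportional to $(\partial(l_1) \bullet (l_i)_\perp)^{d_1}$. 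Since the $l_i$ are pairwise nonproportional, $\partial(l_1)\bullet (l_i)_\perp \neq 0$ for $i \ge 2$, so this matrix is invertible.

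Solving the resulting triangular systems, I obtain $\tilde v_i \in V_i^{(d)}$ for each $i \ge 2$ whose image under $\partial(l_1)^{d_1}$ is the $i$-th summand of $G$. Then $F - \sum_{i \ge 2} \tilde v_i$ is annihilated by $\partial(l_1)^{d_1}$ and so belongs to $V_1^{(d)}$ by the base case, yielding the desired decomposition $F \in V_1^{(d)} + \cdots + V_r^{(d)}$. The main obstacle is verifying the isomorphism $\partial(l_1)^{d_1}\colon V_i^{(d)} \xrightarrow{\sim} V_i^{(d - d_1)}$, whose non-degeneracy rests on the non-vanishing of $\partial(l_1)\bullet (l_i)_\perp$, i.e., on the distinctness of $l_1$ and $l_i$.
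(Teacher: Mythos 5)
The paper does not give its own proof of this lemma; it is cited directly from Iarrobino--Kanev \cite[Lemma 1.31]{MR1735271}, where the argument is carried out in the general framework of annihilators and the perfect pairing between dual polynomial rings. Your inductive proof is a self-contained, elementary alternative, and its overall structure is sound. The $(\Leftarrow)$ direction via the Leibniz rule and the identity $\partial(l_k)\bullet(l_k)_\perp = 0$ is correct, and the $(\Rightarrow)$ direction --- reducing $r$ by applying $\partial(l_1)^{d_1}$ and lifting the decomposition of $G$ back through the claimed isomorphisms $\partial(l_1)^{d_1}\colon V_i^{(d)}\to V_i^{(d-d_1)}$ --- is the right idea and does work.

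A few places you should tighten. In the natural basis $\{(l_i)^j(l_i)_\perp^{m-j}\}_{j=0}^{d_i-1}$, the matrix of $\partial(l_1)\colon V_i^{(m)}\to V_i^{(m-1)}$ is bidiagonal with diagonal entry $(m-j)\bigl(\partial(l_1)\bullet(l_i)_\perp\bigr)$ at index $j$; composing $d_1$ of these gives a triangular matrix with $j$-th diagonal entry $\bigl(\prod_{k=0}^{d_1-1}(d-k-j)\bigr)\bigl(\partial(l_1)\bullet(l_i)_\perp\bigr)^{d_1}$. You should explicitly verify that the falling-factorial constants are nonzero; they are, because $j\le d_i-1$ together with $d_1+d_i\le d$ gives $d-k-j\ge 2$ throughout, and this is precisely where the hypothesis $\sum d_i\le d$ enters. (Calling the matrix ``lower-triangular'' is merely an ordering convention.) In the base case, the change of variables must respect the apolar pairing (i.e.\ be orthogonal), else $(l_1)_\perp$ need not be carried to a multiple of $y$; alternatively a dimension count avoids this entirely: $\partial(l_1)^{d_1}\colon k[x,y]_d\to k[x,y]_{d-d_1}$ is surjective with kernel of dimension $d_1$, and $V_1^{(d)}\subseteq\ker\partial(l_1)^{d_1}$ has dimension $d_1$, so they coincide. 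Finally, your argument --- like the lemma as stated --- tacitly requires $l_i$ and $(l_i)_\perp$ to be linearly independent whenever $d_i\ge 2$, equivalently $\partial(l_i)\bullet l_i\ne 0$; over $\CC$ an isotropic $l_i$ (proportional to $x\pm iy$) makes $V_i^{(m)}$ collapse to a line and the conclusion fails for $d_i\ge 2$. This degenerate case never arises over $\RR$, nor in the paper's applications of the lemma, but it is worth flagging as a hypothesis.
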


The case $d_1 = \ldots = d_r = 1$ is classically referred to as the \emph{apolarity lemma}, and characterizes squarefree forms in the apolar ideal via a Waring decomposition of $F$, as a sum of $d^\text{th}$ powers of linear forms.

Another useful criterion for determining membership in the apolar ideal is:

\begin{lemma} \label{lem:apolarMembershipSameDegree}
Let $F \in k[x,y]_d$, and $G \in k[x,y]_n$ for some $n \le d$.
Then $G \in (F)^\perp$ if and only if $(G)_d \subseteq (F)^\perp$.
\end{lemma}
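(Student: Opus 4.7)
The plan is to exploit two simple facts about the apolar pairing from Definition~\ref{def:apolar}: first, that the map $f \mapsto \partial(f)$ is a $k$-algebra homomorphism from $R$ into the ring of differential operators with constant coefficients, so $\partial(Gh) = \partial(G)\partial(h)$ for any $h \in R$; second, that the restriction of $\langle \cdot, \cdot \rangle$ to $R_m$ is non-degenerate for every $m$ (the monomial basis is orthogonal with nonzero self-pairings in characteristic $0$).

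The forward direction ($G \in (F)^\perp \Rightarrow (G)_d \subseteq (F)^\perp$) is essentially a restatement of the fact that $(F)^\perp$ is an ideal: any element of $(G)_d$ has the form $Gh$ with $h \in R_{d-n}$, and using the multiplicativity above,
\[
\partial(Gh) \bullet F = \partial(h) \bullet \bigl( \partial(G) \bullet F \bigr) = \partial(h) \bullet 0 = 0,
\]
so $Gh \in (F)^\perp$.

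For the reverse direction, set $H := \partial(G) \bullet F$, a form of degree $d - n$. The hypothesis $(G)_d \subseteq (F)^\perp$ becomes, via the same identity, $\partial(h) \bullet H = 0$ for every $h \in R_{d-n}$, i.e. $\langle h, H \rangle = 0$ for all $h \in R_{d-n}$. Since the pairing is non-degenerate on $R_{d-n}$, this forces $H = 0$, which is exactly the statement $G \in (F)^\perp$. The only step deserving any care is the multiplicativity $\partial(Gh) = \partial(G)\partial(h)$, which is immediate from the definition, so I do not anticipate any genuine obstacle in carrying this out.
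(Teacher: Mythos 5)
Your proof is correct and takes essentially the same approach as the paper's: both rely on multiplicativity of $\partial$ and non-degeneracy of the pairing on each graded piece; the only difference is that you argue the reverse implication directly (hypothesis forces $H = \partial(G)\bullet F = 0$), while the paper argues the contrapositive (if $H \ne 0$, non-degeneracy produces a $K$ with $KG \notin (F)^\perp$).
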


\begin{proof}
If $G \in (F)^\perp$, then certainly $(G)_d \subseteq (F)^\perp$, since $(F)^\perp$ is an ideal.
Conversely, suppose $G \not \in (F)^\perp$, and set $H := \langle G, F \rangle \in k[x,y]_{d-n} \ne 0$.
Since $\langle \cdot, \cdot \rangle$ is a perfect pairing on $k[x,y]_{d-n}$, there exists $0 \ne K \in k[x,y]_{d-n}$ with $\langle K, H \rangle \ne 0$.
Then $0 \ne \langle K, \partial(G) \bullet F \rangle = \partial(K) \bullet (\partial(G) \bullet F) = \partial(KG) \bullet F$, so $KG \in (G)_d \setminus (F)^\perp$.
\end{proof}

\subsection{Ranks of forms} \label{ssec:ranks}
Classically, it is an important problem to decompose a given form as a linear combination of powers of linear forms.
Such decompositions lead various notions of rank of a form, which are sensitive to the underlying field of scalars.

\begin{definition} \label{def:ranks}
Let $F \in \RR[x,y]_d$.
The \emph{real} (resp. \emph{complex}) \emph{rank} of $F$ is the minimal number of real (resp. complex) linear forms $l_1, \ldots, l_r$ such that $F$ is an $\RR$-linear (resp. $\CC$-linear) combination of $l_1^d, \ldots, l_r^d$.
The \emph{real} (resp. \emph{complex}) \emph{border rank} of $F$ is the minimal number $r$ such that $F$ is a limit of forms of real (resp. complex) rank $r$.
\end{definition}

\begin{remark} \label{rem:ranksViaApolar}
Via apolarity, we can reinterpret the various ranks in \Cref{def:ranks}.
Indeed, it follows from \Cref{lem:genApolarity} that for any $F \in \RR[x,y]_d$,
    \[
    \rrank(F) = \min \left\{ r \biggm| 
    \begin{array}{cc}
        \exists g \in (F)^\perp_r \text{ with } r \text{ simple}\\
        \text{ linear factors over } \RR
    \end{array} \right\}
    \]
    \[
    \crank(F) = \min \left\{ r \biggm| 
    \begin{array}{cc}
        \exists g \in (F)^\perp_r \text{ with } r \text{ simple}\\
        \text{ linear factors over } \CC
    \end{array} \right\}
    \]
    \[
    \rbrank(F) = \min \left\{ r \biggm| 
    \begin{array}{cc}
        \exists g \in (F)^\perp_r \text{ which factors }\\
        \text{ completely over } \RR
    \end{array} \right\}
    \]
    \[
    \cbrank(F) = \min \{ r \mid (F)^\perp_r \ne 0 \} = \deg(F_\perp)
    \]
    
Note that any complex rank is at most the corresponding real rank, and any border rank is at most the corresponding non-border rank.
Moreover, if $(F)^\perp$ is of type $(d_1, d_2)$, then $\crank(F) = d_1$ if and only if $F_\perp$ has distinct factors over $\CC$, and equals $d_2$ otherwise (since $F_\perp, F^\circ$ form a complete intersection, thus have no common factors).
\end{remark}

\subsection{Almost reality} \label{ssec:almostRealDef}
We now introduce a central notion for this article, which is that of a binary form almost splitting over $\RR$, or a univariate polynomial having almost all real roots.
For technical reasons we will need to include the possibility of one pair of roots being nondistinct, so that the resulting rank is intermediate between a true rank and a border rank.

\begin{definition} \label{def:almostreal}
Let $F \in \RR[x,y]_d$.
We say that $F$ has \emph{almost real roots} if $F$ has $\ge d-2$ simple linear factors over $\RR$.
Equivalently, $F$ has a factorization over $\RR$ of the form 
\[
F = q \cdot \prod_{i=1}^{d-2} l_i, \quad \{l_1, \ldots, l_{d-2}, q\} \text{  pairwise relatively prime}
\]
where $l_i$ are linear and $q$ is quadratic.
A polynomial $F$ with almost real roots thus belongs to exactly one of 3 classes: (i) $F$ has all simple real roots, (ii) $F$ has a unique nonreal complex conjugate pair of roots, (iii) $F$ has a unique double real root (note that in cases (ii) and (iii), all other roots are real and simple).

In analogy with \Cref{rem:ranksViaApolar}, we define the \emph{almost real rank} of $F$ as
    \[
    \arrank(F) := \min \left\{ r \biggm| 
    \begin{array}{cc}
        \exists g \in (F)^\perp_r \text{ with} \\
        \text{almost real roots}
    \end{array} \right\}
    \]
\end{definition}


\begin{remark} \label{rem:arrankGenDef}
One can generalize the definition above to arbitrary (i.e. not necessarily binary) forms.
Given a form $F \in \RR[x_1, \ldots, x_n]$, define the \emph{almost real rank} of $F$ as the minimal length of a zero-dimensional subscheme $Z \subseteq \PP^{n-1}_{\CC}$ such that $I(Z) \subseteq (F)^\perp$ and $Z$ has either (i) all reduced real points, or (ii) exactly 1 nonreal conjugate pair of points, or (iii) exactly 1 double point.
In this article though, we will only use the notion of almost real rank for binary forms.
\end{remark}

Note that for any $F \in \RR[x,y]_d$, it follows from the definitions that $\cbrank(F) \le \arrank(F) \le \rrank(F)$.
For more properties of almost real rank, see \Cref{sec:almostRealRank}.

\section{From binary forms to quadrics} \label{sec:setup2}

\subsection{Associating forms to points} \label{ssec:pointsToForms}
A crucial identification throughout this paper is that of associating points in projective space to (binary) forms, which we now explain.
Let $\nu_d : \PP^1 \to \PP^d$ be the $d$-uple embedding (or $d^\text{th}$ Veronese map).
Let $C_d := \nu_d(\PP^1) \subseteq \PP^d$ be the image, which is the standard rational normal curve of degree $d$.
Given a point $p \in \PP^d$, consider the vector space of linear forms on $\PP^d$ vanishing at $p$ (these generate the vanishing ideal of $p$).
Pulling this space back to $\PP^1$ via $\nu_d$ gives a $d$-dimensional vector space of degree $d$ binary forms, which is a hyperplane in $k[x,y]_d$ (the space of all degree $d$ binary forms).
We set $F(p)$ to be the degree $d$ binary form (unique up to nonzero scale) which is orthogonal to this hyperplane, with respect to the inner product (\ref{eq:innerProduct}).

An alternate way to compute $F(p)$ is: under the $d$-uple embedding, a point $\nu_d([a : b])$ on the rational normal curve is associated to the $d^\text{th}$-power $(ax+by)^d \in k[x,y]_d$.
Since points on the rational normal curve are in linearly general position, extending additively gives a correspondence between all points in $\PP^d$ and binary forms of degree $d$.
Explicitly, for $p \in \PP^d$, we may choose an expression of $p$ as a linear combination of $r \le d+1$ points on $C_d$, say $p = \sum_{i=1}^r c_i p_i$.
Setting $p_i =: \nu_d([a_i : b_i])$, we have
\[
F(p) = \sum_{i=1}^r c_i(a_ix + b_iy)^d \in k[x,y]_d
\]
In this way we may consider the various ranks (defined in \Cref{ssec:ranks,ssec:almostRealDef}) of a point $p \in \PP^d$, as the ranks of the associated binary form $F(p)$.

\subsection{Quadratic forms vs linear functionals on quadrics} \label{ssec:quadFormsVsFunctionals}
For an embedded nondegenerate projective variety $X \subseteq \PP^n$, there is a correspondence between quadratic forms on $X$ and linear functionals on quadrics on $X$.
Let $R = R(X) = \bigoplus_{i \ge 0} R_i$ be the homogeneous coordinate ring of $X$.
A bilinear form on $R_1$ is a bilinear map $R_1 \times R_1 \to k$, or equivalently a linear map $R_1 \otimes_k R_1 \to k$.
The bilinear form is symmetric if and only if this descends to $\Sym^2(R_1) \to k$.
Since $X$ is nondegenerate, $\dim R_1 = n+1$ (i.e. $R_1$ consists of all linear forms on $\PP^d$), so there is a natural surjection $\Sym^2(R_1) \twoheadrightarrow R_2$ with kernel $I(X)_2$, the degree $2$ part of the defining ideal of $X$.
This yields a bijection
\[
\left\{
    \begin{array}{cc}
        \text{ symmetric bilinear forms on } R_1\\
        \text{whose kernel contains } I(X)_2
    \end{array}
\right\}
\longleftrightarrow
\left\{ 
\text{ linear functionals on } R_2
\right\}
\]
Finally, symmetric bilinear forms on $R_1$ whose kernel contains $I(X)_2$ correspond to quadratic forms on the variety $X$.
Explicitly, given $\ell \in R(X)_2^\star$, we associate to $\ell$ a quadratic form $Q_\ell$ on $R(X)_1$ given by $Q_\ell(f) := \ell(f^2)$.

\subsection{Curves of almost minimal degree} \label{ssec:curves}
We now specialize to the main class of varieties of interest in this paper.
Since $P_X$ only depends on real points of $X$, it is natural to restrict to \emph{totally real} varieties (i.e. real varieties whose set of real points is Zariski-dense), and since $\Sigma_X$ only depends on the quadratic part of the coordinate ring of $X$, it is important to restrict to varieties defined by quadrics.
We consider smooth projective non-ACM curves of almost minimal degree.
Such curves arise as projections of the rational normal curve $C_d$ from a point (cf. \cite[Theorem 1.2]{MR2274517}).
Let $C_d^3$ denote the $3^\text{rd}$ secant variety of $C_d$, i.e. the Zariski closure of the union of all secant $2$-planes to $C_d$ in $\PP^d$, meeting $C_d$ in $3$ distinct points. 
For $p \in \PP^d \setminus C_d^3$, let $\pi_p : \PP^d \DashedArrow[->] \PP^{d-1}$ be projection with center $p$ (i.e. away from $p$).
On restriction to $C_d$, the rational map $\pi_p$ becomes a morphism, and the image $X := \pi_p(C_d) \subseteq \PP^{d-1}$ is a smooth rational curve of almost minimal degree $d = \deg X = \codim X + 2$.
Let $R(X) := \mathbb{R}[x_0, \ldots, x_{d-1}]/I(X)$ denote the real coordinate ring of $X$.
The assumption that $p \not \in C_d^3$ is equivalent to the statement that $I(X)$ is generated by quadrics, cf. \cite[Theorem 1.1(2)]{MR2299577}.
Since $X$ is projective, $R(X) = \bigoplus_{i=0}^\infty R(X)_i$ is naturally $\mathbb{Z}$-graded.
\begin{figure}[h]
    \centering
    \includegraphics[scale=0.3]{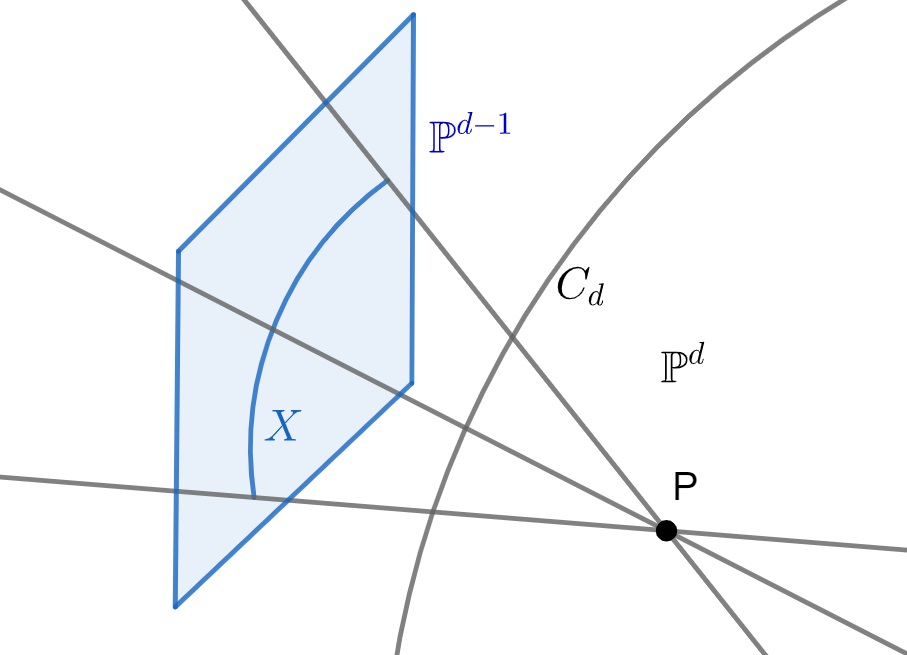}
    \caption{Projection of the rational normal curve $C_d \subseteq \PP^d$ away from a point $p$}
    \label{fig:Curve}
\end{figure}

Our main object of interest is the Hankel spectrahedron
\[
\Sigma_X^\star := \{ \ell \in R(X)_2^* \mid \ell(f^2) \ge 0, \,\, \text{for all} \,\, f \in R(X)_1 \}
\]
This is the dual cone to the sums-of-squares cone of $X$, and is contained in $R(X)_2^\star$, the space of linear functionals on quadrics on $X$.
That it is a spectrahedron can be seen from an alternate description (cf. \cite[Lemma 2.1]{blekherman2012nonnegative} and \Cref{ssec:quadFormsVsFunctionals})
\[
\Sigma_X^\star = \mathbb{S}_{+} \cap (I(X)_2)^\perp
\]
where $\mathbb{S}_{+}$ is the cone of PSD symmetric matrices (identified with nonnegative quadratic forms) on $X$, and $(I(X)_2)^\perp$ is the orthogonal complement of the degree 2 part of the ideal of $X$ (which comprises linear equations in $R(X)_2^*$).
We next spell out a series of basic, but useful, identifications.

\begin{remark} \label{rem:identifications}
(i) The surjection $\pi_p : C_d \twoheadrightarrow X$ induces an injection of coordinate rings $R(X) \hookrightarrow R(C_d)$, which is naturally graded.
In this way $R(X)_1$ is identified with a hyperplane $H \subseteq R(C_d)_1$.

(ii) Since $p \not \in C_d^3$, the quadratic part of the coordinate ring of $X$ can be identified with the quadratic part of the coordinate ring of $C_d$, i.e. $R(X)_2 = R(C_d)_2$.
Equivalently, the Hilbert function of $X$ in degree $2$ has value $2d+1$.

(iii) Via the $d$-uple embedding $\nu_d : \PP^1 \to \PP^d$, $R(C_d)_1$ can in turn be identified with $R(\PP^1)_{d} = \RR[x,y]_{d}$, the space of all degree $d$ binary forms, and similarly $R(C_d)_2 \cong \RR[x,y]_{2d}$.

(iv) The apolar inner product (\ref{eq:innerProduct}) on $\RR[x,y]_{d}$, along with (iii), gives an explicit description of the hyperplane $H$ in (i): namely $H$ is the orthogonal complement in $\RR[x,y]_d$ of the center $F(p)$ (cf. \Cref{ssec:pointsToForms}), which is also $(F(p))^\perp_d$, the degree $d$ part of the apolar ideal of $F(p)$.
Moreover, with respect to the pairing on $\RR[x,y]_{2d}$, every functional $\ell \in \RR[x,y]_{2d}^*$ can be realized as $\ell(\cdot) = \langle \cdot, L \rangle$ for some $L \in \RR[x,y]_{2d}$.


(v) Putting (i) -- (iv) together with \Cref{ssec:quadFormsVsFunctionals}, we may thus associate to any $\ell \in \Sigma_X^\star$ a binary form $L \in \RR[x,y]_{2d}$, as well as quadratic forms $Q_\ell$ acting on $\RR[x,y]_d \cong R(C_d)_1$ and $q_\ell$ acting on $(F(p))^\perp_d \cong R(X)_1$.
Note that $q_\ell = Q_\ell \Big|_H$ is the restriction of $Q_\ell$ to $H$: when represented as symmetric matrices, $Q_\ell$ is $(d+1) \times (d+1)$, whereas $q_\ell$ is $d \times d$.
\end{remark}

We briefly review what is known about algebraic invariants of curves of almost minimal degree.
First, for any nondegenerate variety $Y \subseteq \PP^n_{\CC}$, there is a stratification of $\PP^n$ by (higher) secant varieties of $Y$:
\[
Y \subsetneq Y^2 \subsetneq Y^3 \subsetneq \cdots \subsetneq Y^{k-1} \subsetneq Y^{k} = \PP^n
\]
This gives rise to the notion of $Y$-border rank: for $p \in \PP^n$, the $Y$-border rank of $p$ is defined as $\rk_Y(p) := \min \{ i \mid p \in Y^i \}$ (cf. \cite{MR3368091,MR2628829}).
For $Y = C_d$, it follows from \Cref{ssec:pointsToForms} and apolarity that the $C_d$-border rank of a point is exactly the complex border rank of the corresponding binary form, i.e. $\rk_{C_d}(p) = \cbrank(F(p))$.

Next, a fruitful way to study a projected curve $X = \pi_p(C_d)$ is to consider the rational normal scrolls containing $X$ as a divisor.
Recall that a rational normal scroll is a variety $S(a_1, \ldots, a_m)$ which is a join of disjoint rational normal curves of degrees $a_1, \ldots, a_m$ in $\PP^{\sum_{i=1}^m (a_i+1)-1}$; the tuple $(a_1, \ldots, a_m)$ is called the \emph{type} of the scroll.
As $\dim S(a_1, \ldots, a_m) = m$ and $\deg S(a_1, \ldots, a_m) = \sum_{i=1}^m a_i$, every scroll is a variety of minimal degree, and conversely any nondegenerate variety of minimal degree is either a quadric hypersurface, the second Veronese of $\PP^2$, or a scroll (cf. \cite{MR927946}).
It was shown in \cite{MR2299577} that the Green-Lazarsfeld index of $X$ (and even the entire graded Betti table of $X$) is determined by the types of surface scrolls containing $X$, which in turn is determined by $\rk_{C_d}(p)$:

\begin{theorem}[{\cite[Theorem 1.1]{MR2299577}}] \label{thm:GLindex}
    Let $C_d \subseteq \PP^d$ be a rational normal curve of degree $d$, $\pi_p : \PP^d \DashedArrow[->] \PP^{d-1}$ the projection away from a point $p \in \PP^d \setminus C_d^2$, and $X := \pi_p(C_d) \subseteq \PP^{d-1}$.
    Then
    \begin{enumerate}
        \item $X$ is contained in a surface scroll $S(a,b)$ with $1 \le a \le b$ if and only if $a = \rk_{C_d}(p) - 2$, and
        \item The Green-Lazarsfeld index of $X$ is given by $\alpha(X) = \rk_{C_d}(p)-3$.
    \end{enumerate}
\end{theorem}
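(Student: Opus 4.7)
The plan is to translate both parts through the apolarity correspondence of \Cref{ssec:pointsToForms}, working throughout with the apolar ideal $(F(p))^\perp = (F(p)_\perp, F(p)^\circ)$. The key input is that by \Cref{rem:ranksViaApolar}, $\rk_{C_d}(p) = \cbrank(F(p)) = \deg F(p)_\perp$, so the numerical invariant $\rk_{C_d}(p)$ is visible directly as the initial degree of the apolar ideal.

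For part (1), I would first pass from scrolls in $\PP^{d-1}$ containing $X$ to scrolls in $\PP^d$ containing $C_d$: projecting a surface scroll $Y \supseteq C_d$ of minimal degree $d-1$ from a smooth point $p \in Y \setminus C_d$ produces a surface scroll of degree $d-2$ in $\PP^{d-1}$ containing $X = \pi_p(C_d)$, and every such scroll in $\PP^{d-1}$ arises this way (by coning over $p$). Next, I would use the classical description of scrolls containing $C_d$: a scroll $S(a', b')$ with $a' + b' = d-1$ is cut out by the $2 \times 2$ minors of a row-split of the $2 \times d$ catalecticant matrix of $C_d$, and the split is encoded by a binary form whose degree is $a' + 1$. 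Under apolarity, the condition $p \in S(a', b')$ becomes membership of this binary form in $(F(p))^\perp$. Since the smallest degree of a nonzero element of $(F(p))^\perp$ is $\deg F(p)_\perp = \rk_{C_d}(p)$, an index-matching then identifies the smaller scroll parameter on the $\PP^{d-1}$ side as $a = \rk_{C_d}(p) - 2$, with the corresponding scroll essentially unique.

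For part (2), I would leverage the scroll $S = S(a,b) \supseteq X$ produced in part~(1). Since $X$ is a divisor of known class on a rational normal surface scroll, there is a short exact sequence relating $X$, $S$, and the ambient $\PP^{d-1}$. I would write down the Eagon--Northcott resolution of $S$, which is fully linear of length $a + b = d-2$, then resolve $R(X)$ over $R(S)$ using the divisor class of $X$ in $\mathrm{Pic}(S)$, and splice the two via a mapping cone (or change-of-rings spectral sequence). A direct Betti table comparison should then show that the linear strand of the minimal free resolution of $X$ extends exactly $a$ steps before the first nonlinear syzygy appears, yielding $\alpha(X) = a - 1 = \rk_{C_d}(p) - 3$.

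The main obstacle is part (2): the Betti table computation requires a careful mapping-cone analysis to rule out accidental cancellations that would shift the first nonlinear syzygy of $X$, and this depends sensitively on the divisor class of $X$ in $\mathrm{Pic}(S(a,b))$ together with the precise structure of the Eagon--Northcott complex. A secondary hurdle in part (1) is showing that the smaller parameter $a$ is \emph{exactly} $\rk_{C_d}(p) - 2$, not merely at most: this requires ruling out smaller-$a$ scrolls containing $X$, which reduces to the minimality of $\deg F(p)_\perp$ inside the apolar ideal.
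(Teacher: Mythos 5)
The paper does not prove this theorem: it is quoted verbatim from \cite[Theorem 1.1]{MR2299577} and used as a black box, so there is no internal proof to compare your sketch against.

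Taking your outline on its own terms, the overall skeleton (translate $\rk_{C_d}(p)$ via apolarity into the initial degree of $(F(p))^\perp$; relate surface scrolls containing $X$ to scrolls through $C_d$ and $p$; compute Betti numbers by a mapping cone over the scroll) is the right shape. But the backward direction of your part (1) has a genuine gap: the cone over $p$ of a surface scroll $S(a,b) \subseteq \PP^{d-1}$ containing $X$ is a \emph{threefold} $S(0,a,b) \subseteq \PP^d$, not a surface, so ``coning over $p$'' does not produce a surface scroll through $C_d$ and $p$, and the claimed bijection between scrolls downstairs and upstairs does not follow from it. What one actually does is work directly with the one-parameter family of surface scrolls $S(a', d-1-a')$ through $C_d$ in $\PP^d$ (the row-split picture you describe) and determine by apolarity which of them contain $p$; this is also where the \emph{equality} $a = \rk_{C_d}(p)-2$, rather than a mere bound, is forced. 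For part (2), the Eagon--Northcott plus mapping-cone strategy over $S(a,b)$ is the standard route, but the ``careful analysis to rule out accidental cancellation'' you flag is precisely the nontrivial content: one must identify the divisor class of $X$ in $\operatorname{Pic}(S(a,b))$ and the twists in the spliced resolution explicitly, and your sketch does not indicate how to do that.
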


This implies that $$\cbrank(F(p))-2 = \alpha(X) + 1 \le \eta(X)$$
by \cite[Theorems 4, 6]{MR3633773}.
We will strengthen this inequality in \Cref{thm:lowerBound}.

\subsection{Kernels of rays}

\begin{definition}
Let $\K \subseteq \RR^n$ be a convex cone, and $\ell \in \K$. We say that $\ell$ spans an \emph{extreme ray} of $\K$ if whenever $\ell = \ell_1 + \ell_2$ with $\ell_1, \ell_2 \in \K$, one has $\ell_1 = \lambda_1 \ell$, $\ell_2 = \lambda_2 \ell$ for some $\lambda_1, \lambda_2 \in \RR$.
\end{definition}

If $\ell \in \K$ spans an extreme ray of $\K$, we will simply say that $\ell$ is an extreme ray of $\K$ (i.e. we do not distinguish an extreme ray from its nonzero elements).
For instance, we can say that every $\ell \in \K$ can be written as a sum of extreme rays.

\begin{proposition}[{\cite[Lemma 2.2]{blekherman2012nonnegative}}] \label{prop:extremeRaysChar}
Let $\K = \mathbb{S}_{+} \cap L$ be a spectrahedron, and $\ell \in \K$.
Then $\ell$ is an extreme ray of $\K$ if and only if $\ker \ell$ is maximal, i.e. if $\ker \ell \subseteq \ker \ell'$ for some $\ell' \in L$, then $\ell' = \lambda \ell$ for some $\lambda \in \RR$.
\end{proposition}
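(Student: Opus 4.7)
The plan is to prove the two directions separately, with the harder direction relying on a small perturbation argument inside the linear slice $L$.

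For the direction ``maximal kernel $\Rightarrow$ extreme,'' suppose $\ell = \ell_1 + \ell_2$ with $\ell_1, \ell_2 \in \K$. For any $v \in \ker \ell$, I would write $0 = v^\top \ell\, v = v^\top \ell_1 v + v^\top \ell_2 v$, and since each summand is nonnegative (both $\ell_i$ are PSD), both must vanish, which forces $\ell_i v = 0$ because $\ell_i$ is PSD. Hence $\ker \ell \subseteq \ker \ell_i$, and since $\ell_i \in \K \subseteq L$, the maximality hypothesis gives $\ell_i = \lambda_i \ell$, as required.

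For the direction ``extreme $\Rightarrow$ maximal kernel,'' suppose $\ell' \in L$ satisfies $\ker \ell \subseteq \ker \ell'$. I want to produce a small $\epsilon > 0$ such that $\ell \pm \epsilon \ell'$ are both PSD, so that $\ell = \tfrac{1}{2}(\ell + \epsilon \ell') + \tfrac{1}{2}(\ell - \epsilon \ell')$ exhibits $\ell$ as a sum of two elements of $\K$; by extremality we then get $\ell \pm \epsilon \ell' = \mu_\pm \ell$ for scalars $\mu_\pm$, and subtracting yields $\ell' \in \RR \cdot \ell$. The technical point to verify is the perturbation: setting $V := (\ker \ell)^\perp$, the symmetric matrix $\ell$ restricts to a positive definite operator on $V$ (its kernel is exactly $\ker \ell$), while $\ell'$ preserves the decomposition $V \oplus \ker \ell$ (using $\ker \ell \subseteq \ker \ell'$ together with symmetry to show $\ell'(V) \subseteq V$) and vanishes on $\ker \ell$. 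Choosing $\epsilon$ smaller than the smallest eigenvalue of $\ell|_V$ divided by the operator norm of $\ell'|_V$ makes $\ell \pm \epsilon \ell'$ positive definite on $V$ and zero on $\ker \ell$, hence PSD on all of $\RR^n$. Note also that $\ell \pm \epsilon \ell' \in L$ since $L$ is linear, so indeed $\ell \pm \epsilon \ell' \in \K$.

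The only real obstacle is setting up the perturbation cleanly; once one knows that PSD matrices sharing a common kernel can be perturbed along any direction in the linear span of matrices with larger kernel while remaining PSD, both directions follow immediately. Everything else is essentially unwinding the definitions of extremality and of the convex cone $\mathbb{S}_+$.
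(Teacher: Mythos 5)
Your proof is correct. The paper itself does not prove this proposition --- it simply cites it as \cite[Lemma 2.2]{blekherman2012nonnegative} --- so there is no internal argument to compare against; but what you wrote is the standard proof of the characterization of extreme rays of a spectrahedron via maximal kernels, and both directions are handled cleanly. The ``maximal kernel $\Rightarrow$ extreme'' direction correctly uses that for a PSD matrix $A$, $v^\top A v = 0$ forces $Av = 0$, and the ``extreme $\Rightarrow$ maximal kernel'' direction correctly exploits the fact that both $\ell$ and $\ell'$ preserve the orthogonal decomposition $\RR^n = V \oplus \ker \ell$ (with $\ell$ positive definite on $V$ and both vanishing on $\ker\ell$), so that a small perturbation $\ell \pm \epsilon \ell'$ stays PSD. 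One cosmetic point: your choice of $\epsilon$ divides by the operator norm of $\ell'|_V$, which is problematic if $\ell'|_V = 0$; but in that case $\ell'$ also vanishes on $\ker\ell$ and has no cross terms (by the invariance you established), so $\ell' = 0 = 0\cdot\ell$ and the conclusion is trivial --- worth a one-line remark to handle this degenerate case explicitly.
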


The simplest extreme rays in $\Sigma_X^\star$ are given by point evaluations.
For a point $p \in X$, we can pick an affine representative $\tilde{p}$ lying on the line spanned by $p$, and define a linear functional $\ell_{\tilde{p}}(q) := q(\tilde{p})$ for all $q \in R(X)_2$.
Varying the affine representative only rescales the point evaluation functional, and so by a slight abuse of terminology we will talk about point evaluations at a point $p \in X$ and use $\ell_p$ to denote any of the linear functionals obtained by using an affine representative of $p$.
Point evaluations are precisely the rank $1$ quadratic forms in $\Sigma_X^\star$: if $\ell \in \Sigma_X^\star$ has $\rank Q_\ell = 1$, then $\ell = \ell_p$ for some $p \in X$ \cite[Lemma 2.3]{MR3486176}.

Recall that if $V \subseteq R_d$ is a space of forms, then a point $p$ is called a \emph{basepoint} of $V$ if all forms in $V$ vanish at $p$. If $V$ has no basepoints, we say that $V$ is \emph{basepoint-free}.

\begin{remark} \label{rem:basePtFreeVsPtEval}
We take a moment to clarify the relationships between rays with basepoint-free kernels and sums of point evaluations. 

(i) For $\ell_i \in \Sigma_X^\star$, $\ker(\sum q_{\ell_i}) = \bigcap \ker(q_{\ell_i})$:
for $v \in R(X)_1$, one has $q_{\ell_i}(v) \ge 0$ with equality if and only if $v \in \ker(q_{\ell_i})$, as $q_{\ell_i}$ is PSD.

(ii) If $\ell_p$ is point evaluation at a point $p \in X$, then $p$ is a basepoint of $\ker(q_{\ell_p})$.

(iii) It follows from (i) and (ii) that if $\ell \in \Sigma_X^\star$ is such that $\ker(q_\ell)$ is basepoint-free, then for any decomposition of $\ell$ as a sum of extreme rays $\ell = \sum \ell_i$ of $\Sigma_X^\star$, each extreme ray $\ell_i$ has rank $> 1$, i.e. is not a point evaluation.
(In fact the converse holds as well: if $p$ is a basepoint of $\ker(q_\ell)$, then there is a decomposition of $\ell$ into extreme rays, one of which is $\ell_p$.
However, note that a sum of extreme rays of rank $> 1$ may have basepoints.)
\end{remark}

The next lemma connects kernels of quadratic forms to apolar ideals of binary forms, which is key for our main result.

\begin{lemma} \label{lem:kerQuadFormViaApolar}
Let $d \ge 1$, $L \in \RR[x,y]_{2d}$, and $Q$ the quadratic form on $\RR[x,y]_d$ associated to the functional $\langle \cdot, L \rangle$ (as in \Cref{rem:identifications}).
Then $\ker(Q) = (L)^\perp_d$.
\end{lemma}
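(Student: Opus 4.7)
The approach is to unpack both sides in terms of the differential pairing and reduce the equality to a single application of the nondegeneracy of the apolar inner product on $\RR[x,y]_d$. Let $\ell$ denote the functional $\langle\cdot, L\rangle$ on $\RR[x,y]_{2d}$. By construction (\Cref{ssec:quadFormsVsFunctionals}) the quadratic form $Q$ is $Q(f)=\ell(f^2)$, and its associated symmetric bilinear form is $B(f,g)=\ell(fg)=\langle fg, L\rangle=\partial(fg)\bullet L$ for $f,g\in\RR[x,y]_d$. I would then interpret $\ker(Q)$ in the standard way as $\ker(B)=\{f\in\RR[x,y]_d\mid B(f,g)=0\text{ for all }g\in\RR[x,y]_d\}$.

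The key identity is that differential operators from a polynomial ring commute, so $\partial(fg)=\partial(f)\partial(g)$ as operators on $\RR[x,y]$. Applying both sides to $L$ gives
\[
B(f,g)=\partial(g)\bullet\bigl(\partial(f)\bullet L\bigr)=\langle g,\,\partial(f)\bullet L\rangle,
\]
where the right-hand pairing is the apolar inner product on $\RR[x,y]_d$ (note that $\partial(f)\bullet L$ lies in $\RR[x,y]_d$ since $\deg f=d$ and $\deg L=2d$). Now $f\in\ker(Q)$ is equivalent to $\langle g,\partial(f)\bullet L\rangle=0$ for every $g\in\RR[x,y]_d$.

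The final step invokes the fact, recorded in \Cref{def:apolar}, that $\langle\cdot,\cdot\rangle$ restricts to a (nondegenerate) inner product on $\RR[x,y]_d$. Thus $\langle g,\partial(f)\bullet L\rangle=0$ for all $g\in\RR[x,y]_d$ if and only if $\partial(f)\bullet L=0$, which by definition means $f\in(L)^\perp$; since $f$ has degree $d$, this is $f\in(L)^\perp_d$. This gives the desired equality $\ker(Q)=(L)^\perp_d$.

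\textbf{Main obstacle.} There is no substantive obstacle, only bookkeeping: one must be careful to distinguish the apolar pairing between forms of degree $2d$ (used to build $L$) from the pairing on forms of degree $d$ (used to detect vanishing of $\partial(f)\bullet L$), and to verify that $\partial(fg)\bullet L$ really does factor as $\partial(g)\bullet(\partial(f)\bullet L)$ — this is just the commutativity of partial derivatives, but it is the single computational input that drives the whole argument.
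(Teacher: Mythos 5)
Your proof is correct and takes essentially the same route as the paper's. The paper's own proof reduces the equivalence $\langle fg, L\rangle = 0$ for all $g \in \RR[x,y]_d$ $\iff$ $f \in (L)^\perp$ to the previously established \Cref{lem:apolarMembershipSameDegree}, whose proof in turn rests on exactly the two facts you use — commutativity of differential operators ($\partial(fg)=\partial(f)\partial(g)$) and nondegeneracy of the apolar pairing on a fixed graded piece — so you have simply inlined that argument rather than citing the intermediate lemma.
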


\begin{proof}
The matrix $A$ of $Q$ is constructed with respect to a basis $B = \{b_0, \ldots, b_d\}$ of $\RR[x,y]_d$ as follows: the $(i,j)$ entry of $A$ is $\langle b_ib_j, L \rangle$.
Given $f \in \RR[x,y]_d$, one has $f \in \ker(Q) \iff Q(b_i f) = 0$ for all $0 \le i \le d \iff f \in (L)^\perp$, by \Cref{lem:apolarMembershipSameDegree}.
\end{proof}

We also note that vanishing at points on $\PP^1$ with specified multiplicities imposes independent conditions on binary forms.

\begin{proposition} \label{prop:vanishingIndepCond}
Let $d \ge 0$, $\{ P_1, \ldots, P_r \} \subseteq \PP^1$ and $r_1, \ldots, r_r \in \NN$ be given. 
Then the space of degree $d$ binary forms vanishing to order at least $r_i$ at each $P_i$ has codimension $\sum_{i=1}^r r_i $ in $k[x,y]_d$ (we interpret the space as empty if $\sum_{i=1}^r r_i > d$).
\end{proposition}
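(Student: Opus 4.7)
The strategy is to reinterpret ``vanishing to order at least $r_i$ at $P_i$'' as a divisibility condition, reducing the codimension computation to a dimension count in a polynomial ring.

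First I would fix, for each $P_i = [a_i : b_i]$, a linear form $\ell_i := b_i x - a_i y$ which vanishes at $P_i$. The key observation is that a binary form $f \in k[x,y]_d$ vanishes to order $\ge r_i$ at $P_i$ if and only if $\ell_i^{r_i}$ divides $f$. To verify this, one can change coordinates to send $P_i$ to $[0:1]$, so that $\ell_i$ becomes $x$: then writing $f = \sum_{j=0}^d a_j x^j y^{d-j}$, vanishing of $f$ together with its first $r_i - 1$ partials (along the affine chart $y \ne 0$) at $P_i$ is equivalent to $a_0 = a_1 = \cdots = a_{r_i - 1} = 0$, which is exactly divisibility by $x^{r_i}$.

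Next, since $P_1, \ldots, P_r$ are distinct, the linear forms $\ell_1, \ldots, \ell_r$ are pairwise non-proportional, hence pairwise coprime in the UFD $k[x,y]$. Therefore $f$ vanishes to order $\ge r_i$ at every $P_i$ simultaneously if and only if the product $G := \prod_{i=1}^r \ell_i^{r_i}$ divides $f$. Setting $n := \sum_{i=1}^r r_i = \deg G$, the subspace in question is precisely $G \cdot k[x,y]_{d-n} \subseteq k[x,y]_d$.

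Finally, since $k[x,y]$ is a domain, multiplication by the nonzero form $G$ is an injective linear map $k[x,y]_{d-n} \hookrightarrow k[x,y]_d$. When $n \le d$, its image therefore has dimension $\dim k[x,y]_{d-n} = d-n+1$, giving codimension $(d+1)-(d-n+1) = n = \sum_{i=1}^r r_i$ in $k[x,y]_d$, as claimed. When $n > d$, there is no nonzero multiple of $G$ of degree $d$, so the space is $\{0\}$, matching the conventional interpretation stated in the proposition. No step presents a real obstacle; the only subtlety is the order-of-vanishing/divisibility equivalence in the first paragraph, which I would make rigorous by a single change of coordinates.
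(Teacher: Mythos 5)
Your proof is correct and follows essentially the same route as the paper: reducing order of vanishing to divisibility by $\prod_{i=1}^r (b_i x - a_i y)^{r_i}$ and then counting dimensions. You have simply spelled out the divisibility equivalence and the injectivity of multiplication, which the paper leaves implicit.
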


\begin{proof}
Vanishing at $[a_1:b_1], \ldots, [a_r:b_r]$ to orders $r_1, \ldots, r_r$ is equivalent to being divisible by $\prod_{i=1}^r (b_i x - a_i y)^{r_i}$.
\end{proof}

\subsection{Linear algebra}

Here we collect various results from linear algebra which will be needed in the proof of \Cref{thm:mainThm}.

\begin{lemma} \label{lem:signature}
Let $A = \sum_{i=1}^k \lambda_i v_i v_i^T$ be an $n \times n$ symmetric matrix, with $v_i \in \RR^n$.
If $\{v_1, \ldots, v_k \}$ are linearly independent, then the signature of $A$ is given by the sign pattern of the coefficients $\lambda_i$.
\end{lemma}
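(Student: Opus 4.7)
The plan is to reduce this to Sylvester's law of inertia by expressing $A$ as a congruence of an explicit diagonal matrix. Assemble the vectors into the $n \times k$ matrix $V := [\,v_1 \mid v_2 \mid \cdots \mid v_k\,]$ and let $D := \diag(\lambda_1, \ldots, \lambda_k)$, so that the hypothesis reads $A = VDV^T$. The issue is that $V$ is not square, so one cannot directly invoke inertia for this factorization.

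To fix this, I would use the linear independence of $\{v_1, \ldots, v_k\}$ to extend it to a basis $\{v_1, \ldots, v_n\}$ of $\RR^n$, and let $\widetilde V$ be the resulting $n \times n$ invertible matrix. Pad $D$ with zeros to form $\widetilde D := \diag(\lambda_1, \ldots, \lambda_k, 0, \ldots, 0)$. A direct check (using that columns $k+1, \ldots, n$ of $\widetilde V$ are killed by $\widetilde D$) shows $A = \widetilde V \widetilde D \widetilde V^T$. Since $\widetilde V$ is invertible, this is a genuine congruence, and by Sylvester's law of inertia $A$ and $\widetilde D$ have the same signature. But the signature of the diagonal matrix $\widetilde D$ is read off directly from the signs of its diagonal entries, which are exactly $\lambda_1, \ldots, \lambda_k$ (together with some zeros coming from the extension). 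There is no real obstacle here beyond writing down the correct extension; the content is simply a careful application of Sylvester's law.
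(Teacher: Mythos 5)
Your proof is correct and is essentially the same argument the paper gives, just spelled out: the paper's one-line proof is ``Diagonalize $A$ by extending $\{v_1,\ldots,v_k\}$ to a basis of $\RR^n$,'' and your explicit congruence $A = \widetilde V \widetilde D \widetilde V^T$ followed by Sylvester's law of inertia is precisely the unpacked version of that sentence.
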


\begin{proof}
Diagonalize $A$ by extending $\{ v_1, \ldots, v_k \}$ to a basis of $\RR^n$.
\end{proof}

\begin{lemma}[Cauchy interlacing] \label{lem:cauchy_interlace} 
Let $A$ be a real symmetric matrix. If $B$ is any principal submatrix of $A$, then the eigenvalues of $B$ interlace the eigenvalues of $A$.
\end{lemma}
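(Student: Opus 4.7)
The plan is to use the Courant--Fischer min-max characterization of eigenvalues. Let $A$ be an $n \times n$ real symmetric matrix with ordered eigenvalues $\lambda_1(A) \ge \cdots \ge \lambda_n(A)$, and let $B$ be an $m \times m$ principal submatrix, indexed by a subset $I \subseteq \{1,\ldots,n\}$ with $|I| = m$, with eigenvalues $\lambda_1(B) \ge \cdots \ge \lambda_m(B)$. The interlacing to prove is
\[
\lambda_k(A) \ge \lambda_k(B) \ge \lambda_{k+n-m}(A) \quad \text{for } k = 1, \ldots, m.
\]
Identify $\RR^m$ with the coordinate subspace $W \subseteq \RR^n$ of vectors supported on $I$; under this identification, $x^T B x = x^T A x$ and $\|x\|^2_{\RR^m} = \|x\|^2_{\RR^n}$ for every $x$ corresponding to a vector in $W$.

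For the upper bound $\lambda_k(B) \le \lambda_k(A)$, I would apply the Courant--Fischer formula
\[
\lambda_k(A) \;=\; \max_{\substack{V \subseteq \RR^n \\ \dim V = k}} \; \min_{0 \ne x \in V} \frac{x^T A x}{\|x\|^2}.
\]
The restriction of this maximum to $k$-dimensional subspaces $V \subseteq W$ gives precisely $\lambda_k(B)$, via the identification above. Since we are maximizing over a smaller family of subspaces, the value can only decrease, yielding $\lambda_k(B) \le \lambda_k(A)$.

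For the lower bound $\lambda_k(B) \ge \lambda_{k+n-m}(A)$, the cleanest route is to apply the upper bound already obtained to the pair $(-A, -B)$: the eigenvalues of $-A$ in decreasing order are $-\lambda_n(A) \ge \cdots \ge -\lambda_1(A)$, and similarly for $-B$. The upper bound then reads $-\lambda_{m-k+1}(B) \le -\lambda_{m-k+1+(n-m)}(A) = -\lambda_{n-k+1}(A)$ after the appropriate reindexing; rearranging and substituting $j = m - k + 1$ recovers $\lambda_j(B) \ge \lambda_{j + n - m}(A)$ for $j = 1, \ldots, m$.

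There is essentially no obstacle; the main thing to be careful about is the bookkeeping when passing between the index conventions for $A$ (size $n$) and $B$ (size $m$) under the min-max formula, and ensuring that the identification $W \cong \RR^m$ is an isometry so that Rayleigh quotients transfer faithfully. The result is classical and the two-paragraph argument above is standard.
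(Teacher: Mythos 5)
Your argument is correct and complete: the Courant--Fischer min-max characterization gives the upper bound $\lambda_k(B) \le \lambda_k(A)$ directly by restricting the supremum to subspaces of the coordinate subspace $W$, and the lower bound then follows cleanly by applying this to $(-A, -B)$ and reindexing. The paper itself does not supply an argument here; it simply cites Horn and Johnson, \emph{Matrix Analysis}, Theorem 4.3.17, which is precisely this result, and the standard proof there is the same min-max argument you give. So your proposal is correct and, up to the fact that the paper defers to a reference, it takes the same approach.
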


\begin{proof}
Cf. \cite[Theorem 4.3.17]{MR2978290}.
\end{proof}

\begin{corollary} \label{cor:psd_restriction}
Let $Q$ be a quadratic form on $\RR^n$ with Lorentz signature $(n-1, 1) = (+, \ldots, +, -)$, and $H \subseteq \RR^n$ a hyperplane. Then the following are equivalent for the restriction $Q \big|_H$ of $Q$ to $H$:

\begin{enumerate}
\item $\ker(Q \big|_H) \ne 0$
\item $\rank Q \big|_H = n - 2$
\item $Q \big|_H$ is positive semi-definite, but not positive-definite.
\end{enumerate}
\end{corollary}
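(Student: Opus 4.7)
The plan is to reduce all three conditions to a single statement about the smallest eigenvalue of $Q\big|_H$, by applying Cauchy interlacing (\Cref{lem:cauchy_interlace}) to the restriction. Since $Q$ has signature $(n-1,1)$, it is nonsingular with eigenvalues $\lambda_1\ge\cdots\ge\lambda_{n-1}>0>\lambda_n$. Choose any orthonormal basis of $\RR^n$ in which the first $n-1$ vectors span $H$; then the matrix of $Q\big|_H$ is a principal $(n-1)\times(n-1)$ submatrix of the matrix of $Q$ in this basis, so \Cref{lem:cauchy_interlace} applies.

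Write the eigenvalues of $Q\big|_H$ as $\mu_1\ge\cdots\ge\mu_{n-1}$. The interlacing inequalities $\lambda_i\ge\mu_i\ge\lambda_{i+1}$ for $i=1,\ldots,n-1$ give in particular $\mu_i\ge\lambda_{i+1}>0$ for $i=1,\ldots,n-2$. Hence the first $n-2$ eigenvalues of $Q\big|_H$ are strictly positive, and only the sign of the last eigenvalue $\mu_{n-1}$ varies.

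With this setup the three conditions each become a direct restatement of $\mu_{n-1}=0$: condition (1) says $Q\big|_H$ has a nontrivial kernel, i.e. some $\mu_i=0$, and since $\mu_1,\ldots,\mu_{n-2}>0$ this forces $\mu_{n-1}=0$; condition (2) says exactly one eigenvalue vanishes, which is again $\mu_{n-1}=0$; condition (3) says all $\mu_i\ge 0$ with at least one equal to $0$, and combined with $\mu_1,\ldots,\mu_{n-2}>0$ this again pins down $\mu_{n-1}=0$. Thus (1) $\Leftrightarrow$ (2) $\Leftrightarrow$ (3).

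No step here should pose a real obstacle; the entire argument is essentially bookkeeping once Cauchy interlacing is invoked. The only mild subtlety is ensuring that \Cref{lem:cauchy_interlace} is applicable, i.e. that $Q\big|_H$ really corresponds to a principal submatrix of $Q$ — this is guaranteed by choosing a basis adapted to $H$ and using orthonormality to identify restriction with the principal submatrix.
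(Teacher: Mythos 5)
Your proof is correct and takes essentially the same route as the paper: choose a basis adapted to $H$ so that $Q\big|_H$ is a principal submatrix, then invoke Cauchy interlacing (\Cref{lem:cauchy_interlace}) to control the eigenvalues; your version is merely a bit more explicit in deriving all three equivalences from $\mu_{n-1}=0$, where the paper leaves some implications implicit. One small point: requiring the basis to be orthonormal is unnecessary — for any basis of $\RR^n$ extending a basis of $H$, the Gram matrix of $Q\big|_H$ is the corresponding principal submatrix of the Gram matrix of $Q$.
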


\begin{proof}
Choose a basis of $\RR^n$ which arises from extending a basis of $H$, so that if $A$ is the $n \times n$ symmetric matrix representing $Q$, then $Q \big|_H$ is represented by a principal $(n-1) \times (n-1)$ submatrix $B$ of $A$.
Now $\ker(Q \big|_H) \ne 0$ implies that $0$ is an eigenvalue of $B$.
If $B$ had a negative eigenvalue, then \Cref{lem:cauchy_interlace} would imply that $A$ has $\ge 2$ negative eigenvalues, contradiction.
\end{proof}



\section{A Monomial Example} \label{sec:monomial}

Before proceeding to the proof of \Cref{thm:mainThm}, we illustrate the major steps of the construction in \Cref{sec:construction} in an example.
Let $C_d := \nu_d(\PP^1) \subseteq \PP^d$ be the standard rational normal curve of degree $d$, and $e_0, \ldots, e_d$ the torus-fixed points of $\PP^d$.
Set $X_i := \pi_{e_i}(C_d) \subseteq \PP^{d-1}$, the projection of $C_d$ away from $e_i$.
As this corresponds to the case where the center of projection $F = x^{d-i}y^i$ is a binary monomial of degree $d$, we refer to $X_i$ as a \emph{monomial projection} of the rational normal curve.




\begin{example}
Let $d = 6$, and consider the rational normal curve $C_6 \subseteq \PP^6$. 
Set $X = X_3 = \pi_{e_3}(C_6)$, where the center of projection $e_3$ corresponds to the monomial $x^3y^3 \in \RR[x,y]_6$.
Then $(x^3y^3)^\perp = (x^4, y^4)$, and the form $x^4 - y^4 = (x-y)(x+y)(x^2+y^2)$ has almost real roots, which correspond to 4 points 
\[
\{ p_1, p_2, p_3, p_4 \} := \{ \nu_6([1:1]), \nu_6([1:-1]), \nu_6([1:i]), \nu_6([1:-i]) \} \subseteq C_6.
\]
Accordingly $\arrank(x^3y^3) = 4$, and there is a decomposition
\begin{align*}
80x^3y^3 &= (x+y)^6-(x-y)^6+i(x+iy)^6-i(x-iy)^6 \\
&= (x+y)^6-(x-y)^6+2 \Re (\zeta x+i\zeta y)^6
\end{align*}
where $\zeta := e^{\pi i /12}$.
Using notation as in  \Cref{ssec:complexPairConstruction}, we construct a ray
\begin{align*}
    \ell &:= d_1(x-y)^{12} + d_2(x+y)^{12} + d_4(\zeta x+i\zeta y)^{12} + \overline{d_4}(\overline{\zeta} x + \overline{i\zeta} y)^{12} \\
    &= d_1(x-y)^{12} + d_2(x+y)^{12} - (2\alpha+2\beta i)(x+iy)^{12} - (2\alpha-2\beta i)(x-iy)^{12}
\end{align*}
where $\alpha = \frac{\Re(d_4)}{2}, \beta = \frac{\Im(d_4)}{2}$.
We thus get a $7 \times 7$ matrix representing the quadratic form $Q_\ell$ on $\RR[x,y]_6 \cong R(C_6)_1$ corresponding to the ray $\ell$, e.g. with respect to the (normalized) monomial basis $\left\{ \dfrac{x^{6-i}y^i}{i! (6-i)!} \mid 0 \le i \le 6 \right\}$ of $\RR[x,y]_6$.

Next, we restrict $Q_\ell$ to the hyperplane $(x^3y^3)^\perp_6 \subseteq \RR[x,y]_6$, to obtain a quadratic form $q_\ell$ on $R(X)_1$.
In the chosen monomial basis, this corresponds to deleting the $4^\text{th}$ (= middle) row and column from $Q_\ell$, and yields the following block matrix:
\[
q_\ell = \begin{bmatrix} M & M \\ M & M\end{bmatrix},   \quad M := \begin{bmatrix} d_1+d_2-4\alpha & -d_1+d_2+4\beta & d_1+d_2+4\alpha \\ -d_1+d_2+4\beta & d_1+d_2+4\alpha & -d_1+d_2-4\beta \\ d_1+d_2+4\alpha & -d_1+d_2-4\beta & d_1+d_2-4\alpha \end{bmatrix}.
\]
In particular, the block structure of $q_\ell$ implies that $\rank(q_\ell) \le 2 \iff 0 = \det M = 64(d_1d_2\alpha + (d_1 + d_2) (\alpha^2 + \beta^2))$.
Thus when $\frac{\alpha}{\alpha^2+\beta^2} + (\frac{1}{d_1} + \frac{1}{d_2})= 0$, $d_1, d_2 > 0$, and $\beta \ne 0$,
the submatrix $M$ is singular and $q_\ell$ is PSD of rank $2$.
For example, if $d_1 = 1, d_2 = 1, \alpha = -1/4, \beta = 1/4$, then $M = \begin{bmatrix} 3 & 1 & 1 \\ 1 & 1 & -1 \\ 1 & -1 & 3 \end{bmatrix}$.
Thus the linear functional $\ell \in R(X)_2^\star$ has rank $2$. 
Moreover, one can compute a basis $\{ -x_0 + 12x_1 + x_6, -x_1 + x_5, -x_0 + 15x_4, -x_0 + 12x_1 + 15x_2 \}$ of $\ker(q_\ell)$ in $\RR[x_0,x_1,x_2,x_4,x_5,x_6] = R(\PP^5)$, whose vanishing defines a line in $\PP^5$.
It is readily verified that this line does not meet $X$, which shows that $\ker(q_\ell)$ is basepoint-free.
Thus $\ell \in \Sigma_X^\star \setminus P_X^\star$, hence $\eta(X) \le 2$.
As $\eta(X) \ge 2$ by definition, this shows that $\eta(X) = 2$.
\end{example}

\section{Construction of rays in \texorpdfstring{$\Sigma_X^\star$}{SigmaXstar}} \label{sec:construction}

We now turn to the proof of \Cref{thm:mainThm}, which will span the next two sections.
In this section, we give a general procedure for constructing elements in $\Sigma_X^\star$ of ranks between $\arrank(F(p)) - 2$ and $d - 3$, whose kernels are basepoint-free.
By \Cref{rem:basePtFreeVsPtEval}, this shows that if $\arrank(F(p)) > 3$, then $\eta(X) \le \arrank(F(p)) - 2$.

Choose $r$ with $\arrank(F(p)) \le r \le d-1$, and choose a form $g \in (F(p))^\perp_r$ with almost real roots. 
We assume that no proper divisor of $g$ is in $(F(p))^\perp$ (which is automatic when $r = \arrank(F(p))$, and can be arranged when $r \ge \deg F^\circ$).
Then there is a factorization over $\CC$
\[
    g =: \prod_{i=1}^r l_i
\]
of $g$ into linear forms $l_i =: a_i x + b_i y \in \CC[x,y]_1$ where either
\begin{enumerate}
    \item All $l_i$'s are distinct and real, or
    \item All $l_i$'s are distinct, and there is exactly one conjugate pair $l_r = \overline{l_{r-1}}$, or
    \item All $l_i$'s are real, and there is exactly one repeated factor $l_r = l_{r-1}$.
\end{enumerate}
For the first two cases, the construction that we give below has appeared before, e.g. in \cite[Theorem 6.1 and Theorem 7.1]{blekherman2012nonnegative} (for Veronese embeddings of projective spaces) and \cite[Proposition 3.2 and Procedure 3.3]{MR3486176}.
Case (3) however is new, specifically dealing with a non-reduced zero-dimensional scheme.

\subsection{Simple real roots} \label{ssec:simpleRootConstruction}
We start with case (1), i.e. all roots of $g$ are real and distinct.
By apolarity (\Cref{lem:genApolarity}), $F(p)$ may be expressed as a linear combination of $(l_1)_\perp^d, \ldots, (l_r)_\perp^d$, i.e. there exist $c_1, \ldots, c_r \in \RR$ such that
\begin{align} \label{eqn:linRel}
    F(p) = \sum_{i=1}^r c_i (l_i)_\perp^d
\end{align}
Note that since no proper factor of $g$ is in $(F(p))^\perp$, each coefficient $c_i$ in (\ref{eqn:linRel}) is nonzero.

We now construct elements in $\Sigma_X^\star$ of rank $r-2$.
Let $p_1, \ldots, p_r \in \PP^{d}$ correspond to the $r$ roots of $g$ (explicitly, $p_i = \nu_d([a_i : b_i])$).
Consider a linear combination
\begin{align} \label{eqn:constructedRay}
    \ell := \sum_{i=1}^r d_i \ell_{p_i}^2 \in (R(C_d)_2)^\star
\end{align}
with (as yet unspecified) coefficients $d_i \in \RR$, where $\ell_{p_i} =$ evaluation at $p_i$ (note that $\ell_{p_i}$ corresponds to the binary form $(l_i)_\perp^{d} \in \RR[x,y]_{d}$).
Then as in \Cref{rem:identifications}, $\ell$ gives rise to a quadratic form $Q_\ell$ on $R(C_d)_1$, as well as its restriction $q_\ell$ to $R(X)_1$.

Next, we claim that if the $d_i$ are chosen so that
\begin{align} \label{eqn:relForKer}
    d_1, \ldots, d_{r-1} > 0, \quad \sum_{i=1}^r \dfrac{c_i^2}{d_i} = 0,
\end{align}
then $\rank(q_\ell) = r - 2$.
To show this, we choose coordinates to reduce to a computation with matrices.
Let
\[
Z := \{ p_1, \ldots, p_r \} \subseteq C_d
\]
be the zero-dimensional variety of the points $p_i$. 
The coordinate ring $R(Z)$ satisfies $\dim_{\RR} R(Z)_1 = r$, with basis $\{e_i\}_{i=1}^r$ given by indicator functions of the points, i.e. $e_i(p_j) = \delta_{ij}$.
(One can of course write down explicit polynomial representatives on $\PP^d$ for the $e_i$'s via interpolators (with a suitable padding up to degree $d$), although we will not need such representatives.)
If $I(Z)$ is the defining ideal of $Z$ in $C_d$, then via the isomorphism $R(Z) \cong R(C_d)/I(Z)$, a quadratic form on $R(C_d)_1$ whose kernel contains $I(Z)_1$ (such as $Q_\ell$) induces a quadratic form on $R(Z)_1$, which is in turn represented as an $r \times r$ matrix.

The choice of basis $\{e_i\}_{i=1}^r$ then allows for a convenient expression of the matrix of the induced quadratic form $\widetilde{Q_\ell}$ on $R(Z)_1$: namely, $\widetilde{Q_\ell}$ is represented by a diagonal matrix $\diag(d_1, \ldots, d_r)$ in this basis.
Note that the conditions (\ref{eqn:relForKer}) imply that $d_r < 0$ (recall that $c_i \ne 0$), so by \Cref{lem:signature}, $\widetilde{Q_\ell}$ has Lorentz signature (since $r \le d+1$, any set of $r$ points on $C_d$ are in linearly general position, so the functionals $\ell_{p_1}, \ldots, \ell_{p_r} \in (R(C_d)_2)^\star$ are linearly independent).

On the other hand, we may also consider the quadratic form induced by $q_\ell$ on the points $\pi(Z) := \{ \pi(p_1), \ldots, \pi(p_r) \}$.
The key difference is that the points $\pi(p_1), \ldots, \pi(p_r) \in X$ are \emph{not} in linearly general position -- indeed, the projection map $\pi$ can be viewed as a projectivization of the vector space quotient $\RR[x,y]_d \twoheadrightarrow \RR[x,y]_d/\Span\{F(p)\}$, so the linear relation (\ref{eqn:linRel}) gives a linear dependency
\begin{equation} \label{eqn:linRelProj}
    0 = \sum_{i=1}^r c_i \ell_{\pi(p_i)}
    \quad \iff \quad
    \ell_{\pi(p_r)} = -\dfrac{1}{c_r} \sum_{i=1}^{r-1} c_i \ell_{\pi(p_i)}
\end{equation}
expressing the last point evaluation $\ell_{\pi(p_r)}$ in terms of the others.
In particular, on removing the last point $\pi(p_r)$, the coordinate ring $R(\pi(Z \setminus \{ p_r \}))$ has a basis $\{e_i\}_{i=1}^{r-1}$ for its degree 1 part (note that $\pi(Z \setminus \{ p_r \})$ \emph{is} in linearly general position in $\PP^{d-1}$).
This gives a quadratic form $\widetilde{q_\ell}$ on $R(\pi(Z \setminus \{ p_r \}))_1$ induced by $q_\ell$: explicitly, substituting (\ref{eqn:linRelProj}) into (\ref{eqn:constructedRay}) gives the expression
$\displaystyle \sum_{i=1}^{r-1} d_i \ell_{\pi(p_i)}^2 + \frac{d_r}{c_r^2} \Big( \sum_{i=1}^{r-1} c_i \ell_{\pi(p_i)} \Big)^2$ for (the linear functional corresponding to) $\widetilde{q_\ell}$. 
Setting 
\[
    D := \diag(d_1, \ldots, d_{r-1}), \quad
    \bc := \begin{bmatrix} 
    c_1 & \ldots & c_{r-1} 
    \end{bmatrix}^T,
\]
we see that the matrix of $\widetilde{q_\ell}$ in the basis $\{e_i\}_{i=1}^{r-1}$ is given by $D + \dfrac{d_r}{c_r^2} \bc \bc^T$. Finally, observe that the vector $D^{-1}\bc$ is in the kernel of $\widetilde{q_\ell}$:
\begin{align*}
    (D + \dfrac{d_r}{c_r^2} \bc \bc^T)(D^{-1}\bc) &= \bc + \dfrac{d_r}{c_r^2} \bc \bc^T D^{-1} \bc \\
    &= \bc \left( 1 + \dfrac{d_r}{c_r^2} \left( \sum_{i=1}^{r-1} \dfrac{c_i^2}{d_i} \right) \right) = 0
\end{align*}
by (\ref{eqn:relForKer}).
\Cref{cor:psd_restriction} then implies that $q_\ell$ is PSD (which implies that $\ell \in \Sigma_X^\star$) of rank $r-2$.

It remains to show that for any ray $\ell$ constructed satisfying (\ref{eqn:constructedRay}) and (\ref{eqn:relForKer}), $\ker(q_\ell)$ is basepoint-free. The following reasoning will also apply to the cases in \Cref{ssec:complexPairConstruction} and \Cref{ssec:doubleRootConstruction}.
First, we claim that $\ker(q_\ell)$ can have no basepoints outside of $\pi(Z)$: if not, then $\ker(Q_\ell)$ would have a basepoint outside of $Z$.
However, by \Cref{lem:kerQuadFormViaApolar}, $\ker(Q_\ell) = (g)_d \subseteq (F(p))^\perp_d$ 
is an $\RR$-vector space of dimension $d+1 - \deg g = d+1-r$ which consists of binary forms vanishing at all the points of $Z$ (to orders specified by multiplicities of factors of $g$ in the case of a double root in \Cref{ssec:doubleRootConstruction}), thus cannot have another common zero outside of $Z$ by \Cref{prop:vanishingIndepCond}.
It thus suffices to eliminate the possibility of any point of $\pi(Z)$ as a basepoint, but this follows since the vector $D^{-1}\bc$ in $\ker(q_\ell)$ has all nonzero entries in the basis $\{e_i\}_{i=1}^{r-1}$. 

\subsection{One complex pair} \label{ssec:complexPairConstruction}
Next, we consider case (2), i.e. $g$ has one pair of nonreal roots $l_r = \overline{l_{r-1}}$.
The general argument will follow the outline of case (1), so we focus only on the differences (which will mainly be in the last two functionals).
Essentially, rather than using two functionals arising from evaluations at complex conjugate points, we use the real and imaginary parts of one complex point evaluation.
Over $\CC$, there is an expression $F(p) = \sum_{i=1}^{r-2} c_i (l_i)_\perp^d + c_{r-1} (l_{r-1})_\perp^d + c_r (l_r)_\perp^d$, and independence of the forms $\{ (l_i)_\perp^d \}_{i=1}^r$ and conjugate-symmetry forces $c_r = \overline{c_{r-1}}$.
By rescaling $l_r \in \CC[x,y]_1$ we may assume that $c_r = 1$ (so that $c_{r-1} = 1$ as well), and thus write the analogue of (\ref{eqn:linRel}) in the form 
\begin{equation} \tag{$3'$} \label{eqn:linRel'}
F(p) = \sum_{i=1}^{r-2} c_i (l_i)_\perp^d + 2 \Re((l_r)_\perp^d)
\end{equation}
where $c_1, \ldots, c_{r-2} \in \RR$ are all nonzero, since no proper factor of $g$ is in $(F(p))^\perp$.

We then construct the functional in $\Sigma_X^\star$.
As before, choosing $p_1, \ldots, p_r \in \PP^d$ corresponding to the roots of $g$ (with $p_r = \overline{p_{r-1}}$ a nonreal conjugate pair), we obtain a linear functional $\ell := \sum_{i=1}^{r-2} d_i \ell_{p_i}^2 + d_r \ell_{p_r}^2 + \overline{d_r} \overline{\ell_{p_r}}^2 \in R(C_d)_2^\star$, which becomes

\begin{equation} \tag{$4'$} \label{eqn:constructedRay'}
    \ell := \sum_{i=1}^{r-2} d_i \ell_{p_i}^2 + 4 \alpha (\Re((l_r)_\perp^d)^2 - \Im((l_r)_\perp^d)^2) - 8 \beta (\Re((l_r)_\perp^d) \Im((l_r)_\perp^d))
\end{equation}
where $\alpha := \frac{\Re(d_r)}{2}$, $\beta := \frac{\Im(d_r)}{2}$.
We claim that if the $d_i$ are chosen so that
\begin{equation} \tag{$5'$} \label{eqn:relForKer'}
    d_1, \ldots, d_{r-2} > 0, \; \beta \ne 0, \; \frac{\alpha}{\alpha^2 + \beta^2} + \sum_{i=1}^{r-2} \frac{c_i^2}{d_i} = 0,
\end{equation}
then $q_\ell$ has rank $r-2$ and basepoint-free kernel.
Indeed, writing $\ell_1, \ldots, \ell_r$ for the images of $(l_1)_\perp^d, \ldots, (l_{r-2})_\perp^d$, $2 \Re((l_r)_\perp^d), 2 \Im((l_r)_\perp^d)$ in $R(C_d)_1^\star$, 
and choosing forms in $R(C_d)_1$ dual to the functionals $\ell_1, \ldots, \ell_r$, we see that the matrix of $Q_\ell \Big|_{\Span\{e_i\}}$ is given by $\begin{bmatrix} D & 0 \\ 0 & A \end{bmatrix}$ where $D := \diag(d_1, \ldots, d_{r-2})$, $A := \begin{bmatrix} \alpha & -\beta \\ -\beta & -\alpha \end{bmatrix}$, so that $Q_\ell$ has Lorentz signature (note that $\det(A) < 0$).
Expressing (\ref{eqn:linRel'}) in the form
\begin{equation} \tag{$6'$} \label{eqn:linRelProj'}
    \ell_{r-1} = -\sum_{i=1}^{r-2} c_i \ell_i,
\end{equation}
setting $\bc := \begin{bmatrix} c_1 & \ldots & c_{r-2} \end{bmatrix}^T$, and substituting (\ref{eqn:linRelProj'}) into (\ref{eqn:constructedRay'}) gives the matrix
\begin{align*}
\widetilde{q_\ell} = \begin{bmatrix}
    D+\alpha \bc \bc^T & \beta \bc \\
   \beta \bc^T & -\alpha
\end{bmatrix}.
\end{align*}
Finally, observe that the vector $\begin{bmatrix}
D^{-1} \bc \\ \frac{-\beta}{\alpha^2+\beta^2}
\end{bmatrix}$ is in $\ker \widetilde{q_\ell}$, and has all nonzero entries:
\begin{align*}
    \begin{bmatrix}
    D+\alpha \bc \bc^T & \beta \bc \\
   \beta \bc^T & -\alpha
\end{bmatrix} \begin{bmatrix}
D^{-1} \bc \\ \frac{-\beta}{\alpha^2+\beta^2}
\end{bmatrix} &= \begin{bmatrix}
\bc + \alpha \bc \bc^T D^{-1} \bc - \frac{\beta^2}{\alpha^2 + \beta^2} \bc \\ 
\beta \bc^T D^{-1} \bc + \frac{\alpha \beta}{\alpha^2+\beta^2}
\end{bmatrix} \\
&= \begin{bmatrix}
\bc (1 + \alpha \bc^T D^{-1} \bc - \frac{\beta^2}{\alpha^2 + \beta^2}) \\
\beta (\bc^T D^{-1} \bc + \frac{\alpha}{\alpha^2+\beta^2})
\end{bmatrix} \\
&= \begin{bmatrix}
\alpha \bc (\bc^T D^{-1} \bc + \frac{\alpha}{\alpha^2 + \beta^2}) \\
\beta (\bc^T D^{-1} \bc + \frac{\alpha}{\alpha^2+\beta^2})
\end{bmatrix} = 0.
\end{align*}
The reasoning that $\ker(q_\ell)$ is basepoint-free was already explained at the end of \Cref{ssec:simpleRootConstruction}.
\subsection{One double root} \label{ssec:doubleRootConstruction}
Finally, we consider case (3), i.e. $g$ has a unique real double root $l_r = l_{r-1}$ (with all other roots real and simple).
In this case, the two functionals we use correspond to evaluation at the double point, as well as differentiation followed by evaluation.
From apolarity, there is a relation 
\begin{equation} \tag{$3''$} \label{eqn:linRel''}
    F(p) = \sum_{i=1}^{r-2} c_i (l_i)_\perp^d + c_{r-1} (l_r)_\perp^d + c_r l_r (l_r)_\perp^{d-1}
\end{equation}
where as before $c_1, \ldots, c_r \in \RR$ are all nonzero.
Let $\ell_1, \ldots, \ell_r \in R(C_d)_1^\star$ be the linear functionals corresponding to $(l_1)_\perp^d, \ldots, (l_{r-2})_\perp^d, (l_r)_\perp^d, 2 l_r (l_r)_\perp^{d-1}$, and consider the linear functional in $(R(C_d)_2)^\star$ defined by
\begin{equation} \tag{$4''$} \label{eqn:constructedRay''}
\ell := \sum_{i=1}^{r-1} d_i \ell_i^2 + d_r \ell_{r-1} \ell_r.
\end{equation}
We claim that if the $d_i$ are chosen so that
\begin{equation} \tag{$5''$} \label{eqn:relForKer''}
d_1, \ldots, d_{r-1} > 0, \; d_{r-1} - 2 \frac{d_r c_{r-1}}{c_r} - \frac{d_r^2}{c_r^2} \sum_{j=1}^{r-2} \frac{c_j^2}{d_j}=0
\end{equation}
then $q_\ell$ has rank $r-2$ and basepoint-free kernel.
Indeed, the matrix of $Q_\ell$ (restricted to the subspace of $R(C_d)_1$ spanned by forms dual to $\ell_1, \ldots, \ell_r$) is given by $\begin{bmatrix} D & 0 \\ 0 & A \end{bmatrix}$ where $D := \diag(d_1, \ldots, d_{r-2})$, $A := \begin{bmatrix} d_{r-1} & \frac{d_r}{2} \\ \frac{d_r}{2} & 0 \end{bmatrix}$, hence has Lorentz signature (note that $\det(A) < 0$).
Writing (\ref{eqn:linRel''}) in the form
\begin{equation} \tag{$6''$} \label{eqn:linRelProj''}
\ell_r = -\frac{2}{c_r} \sum_{i=1}^{r-1} c_i \ell_i,
\end{equation}
setting $\bc := \begin{bmatrix} c_1 & \ldots & c_{r-2} \end{bmatrix}^T$, and substituting (\ref{eqn:linRelProj''}) into (\ref{eqn:constructedRay''}) gives the matrix 
\begin{align*}
\widetilde{q_\ell} = \begin{bmatrix}
D & -\frac{d_r}{c_r} \bc \\
   -\frac{d_r}{c_r} \bc^{T} & d_{r-1} - \frac{2 d_r c_{r-1}}{c_r}
\end{bmatrix}.
\end{align*}
As before, we exhibit a kernel vector $\begin{bmatrix} D^{-1} \bc \\ \frac{c_r}{d_r} \end{bmatrix}$ with all nonzero entries:
\begin{align*}
\begin{bmatrix} D & -\frac{d_r}{c_r} \bc \\
      -\frac{d_r}{c_r} \bc^{T} & d_{r-1} - \frac{2 d_r c_{r-1}}{c_r}
      \end{bmatrix} \begin{bmatrix} D^{-1} \bc \\ \frac{c_r}{d_r} \end{bmatrix} 
   &= \begin{bmatrix} \bc - \bc \\ -\frac{d_r}{c_r} \bc^T D^{-1} \bc + \frac{c_r}{d_r}(d_{r-1} - \frac{2 d_r c_{r-1}}{c_r}) \end{bmatrix} \\
   &= \begin{bmatrix} 0 \\ \frac{c_r}{d_r}(d_{r-1} - 2 \frac{d_r c_{r-1}}{c_r} - \frac{d_r^2}{c_r^2} \bc^T D^{-1} \bc) \end{bmatrix} = 0.
\end{align*}
We remark that the (LHS of the) equation in (\ref{eqn:relForKer''}) is precisely the Schur complement $\widetilde{q_\ell}/D$ (this provides another proof that $\widetilde{q_\ell}$ is PSD but not PD). As a quadratic in $\frac{d_r}{c_r}$, this equation always has 2 real solutions (as the discriminant $(2c_{r-1})^2 + 4d_{r-1} \bc^T D^{-1} \bc$ is $> 0$ by (\ref{eqn:relForKer''})).

As before, the reasoning that $\ker(q_\ell)$ is basepoint-free was given at the end of \Cref{ssec:simpleRootConstruction}.

\section{Lower bound} \label{sec:lowerBound}

In this section, we prove a lower bound on the Hankel index in terms of the almost real rank of the center of projection, showing that our construction in \Cref{sec:construction} of rays of minimal rank is sharp.
Throughout, let $X = \pi_p(C_d)$ be a projection with center $p$ of a rational normal curve $C_d \subseteq \PP^d$.
We assume that the center $p$ is not contained in $C_d^3$ (which implies $d \ge 6$), and as in \Cref{ssec:pointsToForms}, we associate to $p$ a binary form $F(p) \in \RR[x,y]_d$.

\begin{theorem} \label{thm:lowerBound}
We have the following bound on the Hankel index of a projected rational normal curve $X$ with center of projection $F(p)$:
$$\eta(X)\geq \arrank(F(p)) - 2.$$
\end{theorem}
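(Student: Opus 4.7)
The plan is to translate to binary forms via apolarity and extract from the hypothesis an almost real form of controlled degree inside $(F(p))^\perp$. Let $L \in \RR[x,y]_{2d}$ be the binary form corresponding to $\ell$ via the apolar pairing (Remark~\ref{rem:identifications}), so that the quadratic form $Q_L$ on $\RR[x,y]_d \cong R(C_d)_1$ restricts to $q_\ell$ on $H := (F(p))^\perp_d$; set $r := \rank q_\ell$. First I would observe that $Q_L$ cannot be PSD. If it were, then since $C_d$ has minimal degree (so $\Sigma_{C_d} = P_{C_d}$), $L$ would be a nonnegative combination of $2d$-th powers of real linear forms, whose corresponding point evaluations push forward under $\pi$ to a nonnegative combination of point evaluations at real points of $X$, forcing $\ell \in P_X^\star$, contrary to assumption.

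Since $Q_L|_H$ is PSD of rank $r$ while $Q_L$ has a negative eigenvalue, Cauchy interlacing (Lemma~\ref{lem:cauchy_interlace}) forces $Q_L$ to have Lorentz signature $(s_+,1)$ with $k := \rank Q_L \in \{r+1, r+2\}$. I then claim $\ker Q_L \subseteq H$: if not, then $\ker Q_L + H = \RR[x,y]_d$, the induced map $H/(H \cap \ker Q_L) \to \RR[x,y]_d/\ker Q_L$ is an isomorphism, and $Q_L|_H$ descends through it to the nondegenerate Lorentz form induced by $Q_L$ on the quotient, contradicting its PSD-ness. Since $\dim \ker Q_L = d+1-k$, the apolar ideal $(L)^\perp$ has $\deg L_\perp = k$ and $\ker Q_L = L_\perp \cdot \RR[x,y]_{d-k}$. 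Combined with $\ker Q_L \subseteq H$, Lemma~\ref{lem:apolarMembershipSameDegree} yields $L_\perp \in (F(p))^\perp$, producing a form of degree $k \le r + 2$ in the apolar ideal of $F(p)$.

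The hard part will be showing that $L_\perp$ has almost real roots in the sense of Definition~\ref{def:almostreal}. Factoring $L_\perp = \prod_i p_i^{e_i}$ over $\CC$ and applying the generalized apolarity lemma (Lemma~\ref{lem:genApolarity}), $L$ admits a generalized Waring decomposition $L = \sum_{i,j} c_{ij}\, p_i^{\,j} (p_i)_\perp^{\,2d-j}$ with $0 \le j \le e_i - 1$. The signature of $Q_L$ then decomposes into block contributions indexed by distinct factors: a simple real factor contributes $(1,0)$ or $(0,1)$ by the sign of its Waring coefficient, a complex conjugate pair contributes $(1,1)$, and a real double root contributes $(1,1)$. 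Matching ``exactly one negative eigenvalue'' against these contributions forces the root configuration of $L_\perp$ into exactly one of the three cases of Definition~\ref{def:almostreal}. Higher-multiplicity factors (triple and above, which can a priori produce Lorentz-compatible block signatures like $(2,1)$) would need to be excluded using either a finer Hankel analysis combined with $\cbrank(F(p)) \ge 4$ (coming from $p \notin C_d^3$), or---in the degenerate restriction case---by exploiting the extra kernel direction in $\ker q_\ell \setminus \ker Q_L$ to produce an alternative form of degree $\le r+2$ in $(F(p))^\perp$ with truly almost real roots. Combining everything, $\arrank(F(p)) \le \deg L_\perp = k \le r + 2$, giving $\eta(X) \ge \arrank(F(p)) - 2$.
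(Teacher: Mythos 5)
Your overall strategy mirrors the paper's: translate the ray $\ell$ to a degree-$2d$ form $L$, deduce that $Q_L$ is not PSD and hence has Lorentz signature, show $L_\perp \in (F(p))^\perp$, and argue that $L_\perp$ has almost real roots. One piece of your proposal is genuinely slicker than the paper's, namely proving $L_\perp \in (F(p))^\perp$ directly from the observation $\ker Q_L \subseteq H$: if a kernel vector escaped $H$, then $Q_L$ would coincide with $q_\ell$ modulo the kernel, forcing $Q_L$ to be PSD. The paper instead splits into cases on whether $L_\perp$ has simple roots and, in the non-simple case, introduces the ``reduction of order'' polynomial $\widetilde{L_\perp}$ together with a dimension count to conclude $(L_\perp)_d \subseteq H$; your argument reaches the same conclusion in one step. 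Your rank bookkeeping $k = \rank Q_L = \deg L_\perp$ and $k \le r+2$ is also correct and delivers the inequality in the same way.

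However, the step you flag as ``the hard part''---that $L_\perp$ has almost real roots---is exactly where the gap sits, and it is the crux of the theorem. Your block-signature heuristic is right in spirit for simple real factors, a single conjugate pair, and a single real double root, but, as you correctly anticipate, a real triple root of $L_\perp$ (with all other roots real and simple) yields a $3\times3$ Hankel block whose signature can be $(2,1)$, which is Lorentz-compatible; signature bookkeeping alone cannot exclude it. Neither of your hinted remedies amounts to an argument: $\cbrank(F(p)) \ge 4$ controls $\deg L_\perp$, not the multiplicity pattern of its roots, and ``exploiting the extra kernel direction'' is only a placeholder. The paper closes this with two distinct tools. First, a perturbation/continuity argument: if $L_\perp$ has neither almost real roots nor a unique real triple root, one can approximate $L_\perp$ by forms $L_\epsilon$ with at least two pairs of simple nonreal roots; each such pair contributes a negative eigenvalue to the associated Hankel form $Q_\epsilon$, and since $\dim\ker Q_\epsilon$ is constant along the family, eigenvalue continuity forces $Q_L$ to have $\ge 2$ negative eigenvalues, contradicting Lorentz signature. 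Second, an explicit matrix computation for the remaining triple-root case: substituting the apolar relation $0 = \sum c_i\ell_i$ into the generalized Waring expansion of $\ell$ shows that a representing matrix of $\widetilde{q_\ell}$ has last diagonal entry zero, hence (being PSD) an entire zero column, which produces a basepoint of $\ker q_\ell$---contradicting the basepoint-free hypothesis obtained after reducing to extreme rays that are not point evaluations (a reduction you should also make explicit, since your setup works with $\ell \notin P_X^\star$ rather than basepoint-freeness). Without an actual argument excluding the triple root, the proof is incomplete at its most critical point.
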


\begin{proof}
Fix a ray $\ell \in \Sigma_X^\star$.
By \Cref{rem:identifications}(v), we get $L \in \RR[x,y]_{2d}$ such that $\ell(\cdot) = \langle \cdot, L \rangle$, and quadratic forms $Q_\ell$ on $\RR[x,y]_d$ and $q_\ell := Q_\ell \Big|_H$ where $H = (F(p))^\perp_d \cong R(X)_1$.
Note that $q_\ell$ is PSD since $\ell$ was an element of $\Sigma_X^\star$, which implies by \Cref{lem:cauchy_interlace} that $Q_\ell$ has at most one negative eigenvalue.

We now further assume that $\ker q_\ell$ is basepoint-free.
By \Cref{lem:signature}, this implies that $Q_\ell$ is not PSD, as otherwise $q_\ell$ would be a sum of point evaluations, contradicting \Cref{rem:basePtFreeVsPtEval}.
It follows that $Q_\ell$ has Lorentz signature $(+, \ldots, +, -)$.

Consider the apolar ideal $(L)^\perp = (L_\perp, L^\circ)$, and set $s := \deg L_\perp$ (so that $\deg L^\circ = 2d + 2 - s$).
By \Cref{lem:kerQuadFormViaApolar}, $\ker(Q_\ell) = (L)^\perp_d$, and since this space is nonzero (being basepoint-free), one must have $s \le d$ (in particular, $s < \deg L^\circ$).
Write 
\[
L_\perp := \prod_{i=1}^t l_i^{d_i}
\]
where $l_i \in \RR[x,y]_1$ are distinct linear forms and $\sum d_i = s$.

We next claim that $L_\perp$ has almost real roots, which is the core of this proof.
For convenience, say that a form $G$ has a \emph{triple root} if $G$ has a real root of multiplicity $3$, and all other roots are real and simple.
We first show, via a perturbation argument, that either $L_\perp$ has almost real roots, or $L_\perp$ has a triple root.
Then, we show that $L_\perp$ does not have a triple root.

Thus, suppose that $L_\perp$ does not have almost real roots, nor a triple root.
The key idea for the perturbation argument is the following: we may approximate $L_\perp$ by a sequence of polynomials, all of which have at least 2 pairs of simple complex roots.
Intuitively, each pair of simple complex roots contributes a negative eigenvalue to the signature, and then continuity implies that $Q_\ell$ has $\ge 2$ negative eigenvalues, a contradiction.

To be precise, we consider the following types of replacements of certain factors of $L_\perp$, depending on the way that $L_\perp$ fails to have almost real roots/a triple root:
\begin{align*}
    (x - \alpha)^2 (x - \overline{\alpha})^2 &\longrightarrow (x - \alpha - \epsilon) (x - \overline{\alpha} - \epsilon) (x - \alpha + \epsilon) (x - \overline{\alpha} + \epsilon) \\
    (x - a)^4 &\longrightarrow (x - a)^4 + \epsilon^4 \\
    (x - a)^2 (x-b)^2 &\longrightarrow ((x - a)^2 + \epsilon^2) ((x - b)^2 + \epsilon^2) \\
    (x - \alpha)(x - \overline{\alpha})(x - a)^2 &\longrightarrow (x - \alpha) (x - \overline{\alpha}) ((x - a)^2 + \epsilon^2)
\end{align*}
(here $\alpha \in \CC \setminus \RR$, and $a, b \in \RR$ are distinct).
Then for all sufficiently small $\epsilon > 0$, the polynomial $L_\epsilon$ obtained from $L_\perp$ by performing one of the above replacements has $\ge 2$ pairs of simple complex roots, and satisfies $L_\epsilon \to L_\perp$ as $\epsilon \to 0$ (if $L_\perp$ already has 2 pairs of simple complex roots, then we may take $L_\epsilon = L_\perp$).
Taking apolar ideals of the form $(L_\epsilon, L^\circ)$ gives a sequence of degree $2d$ forms converging to $L$, and with this associated quadratic forms $Q_\epsilon \to Q_\ell$.
Then each $L_\epsilon$ has $\ge 2$ pairs of simple complex roots, so $Q_\epsilon$ has $\ge 2$ negative eigenvalues.
Furthermore, $\dim \ker(Q_\epsilon) = \dim (L_\epsilon)_d = d-s+1$ is constant in $\epsilon$.
Then continuity of eigenvalues implies that $Q_\ell$ has $\ge 2$ negative eigenvalues (as no negative eigenvalue can become positive without crossing zero, and the number of zero eigenvalues stays constant), contradicting the fact that $Q_\ell$ has Lorentz signature. 

To conclude that $L_\perp$ has almost real roots, it remains to eliminate the possibility that $L_\perp$ has a triple root.
We will show that if $L_\perp$ has a triple root, then $\ker q_\ell$ is not basepoint-free.
Suppose the roots of $L_\perp$ have multiplicities $(d_1, \ldots, d_{s-2}) = (1, \ldots, 1, 3)$.
Setting $l := l_{s-2}$, by apolarity we may write
\begin{equation} \label{eqn:tripleRootRay}
    L = \sum_{i=1}^{s-3} d_i (l_i)_\perp^{2d} + d_{s-2} (l_\perp)^{2d} + d_{s-1} l(l_\perp)^{2d-1} + d_s l^2 (l_\perp)^{2d-2}
\end{equation}
for some $d_1, \ldots, d_s \in \RR$.
Write $\ell_1, \ldots, \ell_s$ for the functionals in $R(C_d)_1^\star$ corresponding to $(l_1)_\perp^d, \ldots, (l_{s-3})_\perp^d$, $(l_\perp)^d, l(l_\perp)^{d-1}, l^2(l_\perp)^{d-2}$.
Then (\ref{eqn:tripleRootRay}) may be expressed as 
\begin{equation} \label{eqn:tripleRootRay'}
    \ell = \sum_{i=1}^{s-2} d_i \ell_i^2 + d_{s-1} \ell_{s-2} \ell_{s-1} + d_s \ell_{s-2} \ell_s
\end{equation}
(note that $\ell_{s-2} \ell_s = \ell_{s-1}^2$). Since $L_\perp \in H = (F(p))^\perp$ (shown below), there is also a relation
\[
0 = \sum_{i=1}^s c_i \ell_i
\]
with $c_i \in \RR$.
Note that since a proper factor of $L_\perp$ may lie in $(F(p))^\perp$, we cannot say a priori whether any particular $c_i$ is nonzero.
We thus consider cases depending on whether $c_s$ is nonzero.

If $c_s \ne 0$, then substituting $\ell_s = -\frac{1}{c_s} \sum_{i=1}^{s-1} c_i \ell_i$ into (\ref{eqn:tripleRootRay'}) gives a matrix for $\widetilde{q_\ell}$ (with respect to $\{\ell_1, \ldots, \ell_{s-1}\}$) whose last diagonal entry ($=$ coefficient of $\ell_{s-1}^2)$) is 0.
If $c_s = 0$, then substituting $\ell_i = -\frac{1}{c_i} \sum_{j \ne i}^{s-1} c_j \ell_j$ (for some $1 \le i \le s-1$) into (\ref{eqn:tripleRootRay'}) gives a matrix for $\widetilde{q_\ell}$ (with respect to $\{\ell_1, \ldots, \hat{\ell_i}, \ldots, \ell_s \}$) whose last diagonal entry ($=$ coefficient of $\ell_s^2)$) is 0.
Thus in any case $\widetilde{q_\ell}$ can be represented by a matrix with last diagonal entry $0$, and since $\widetilde{q_\ell}$ is PSD, this implies that the entire last column of $\widetilde{q_\ell}$ must be $0$.
Then $\ker(\widetilde{q_\ell})$ is generated by the vector $\begin{bmatrix} 0 & \ldots & 0 & 1 \end{bmatrix}^T$, but this implies that $\ker(q_\ell)$ is not basepoint-free (as each of the roots of $l_1, \ldots, l_{s-1}$ would be basepoints).

This shows that $L_\perp$ has almost real roots.
Next, we show that $L_\perp$ is contained in the apolar ideal of the center $(F(p))^\perp$.
If $L_\perp$ has simple roots, then the points on $X$ corresponding to these roots cannot be in linearly general position: if they were, then \Cref{lem:signature} implies that $\ell$ would be a sum of point evaluations, contradicting \Cref{rem:basePtFreeVsPtEval}(iii).
This means precisely that $F(p)$ can be written as a linear combination of $d^\text{th}$ powers of roots of $L_\perp$, so by apolarity $L_\perp \in (F(p))^\perp$.

Next, suppose that $L_\perp$ does not have simple roots, and define the following ``reduction of order'' polynomial
\[
\widetilde{L_\perp} := \prod_{i=1}^t l_i^{\lceil d_i/2 \rceil}
\]
with the key property that $L_\perp$ divides $\widetilde{L_\perp}^2$.
We claim that 
\begin{align} \label{eqn:tildeEquality}
    (\widetilde{L_\perp})_d \cap H = \ker(q_\ell) = (L_\perp)_d \cap H.
\end{align}
To see this, note that for $f \in H$, one has $f \in \ker(q_\ell) \iff q_\ell(f) = 0$ (as $q_\ell$ is PSD on $H$ -- this need not be the case if $q_\ell$ were indefinite). 
Together with \Cref{lem:kerQuadFormViaApolar}, this gives the second equality.
For the first equality, note that $L_\perp \in (\widetilde{L_\perp}) \implies (L_\perp)_d \cap H \subseteq (\widetilde{L_\perp})_d \cap H$.
Conversely, any $f \in (\widetilde{L_\perp})_d \cap H$ is of the form $f := g \widetilde{L_\perp}$ (for some $g \in \RR[x,y]_{d-\deg \widetilde{L_\perp}}$), hence satisfies $q_\ell(f) = \langle g^2(\widetilde{L_\perp})^2, L \rangle = 0$, since $L_\perp$ divides $\widetilde{L_\perp}^2$, and $\langle L_\perp, L \rangle = 0$.

In view of (\ref{eqn:tildeEquality}): given that $L_\perp \ne \widetilde{L_\perp}$, one has $(L_\perp)_d \subsetneq (\widetilde{L_\perp})_d$, but since the intersections of these subspaces with the hyperplane $H$ coincide, it must be the case that $\dim (L_\perp)_d$, $\dim (\widetilde{L_\perp})_d$ differ by exactly $1$ (note that dimension decreases by at most $1$ when intersecting with a hyperplane, and does not change precisely when the subspace is already contained in the hyperplane).
From this we deduce that $(L_\perp)_d \subseteq H$, hence $L_\perp \in (F(p))^\perp$ by \Cref{lem:apolarMembershipSameDegree}.
(Note that this argument also shows that $\deg L_\perp \le 1 + \deg \widetilde{L_\perp}$, which gives another proof that $L_\perp$ has at most one multiple real root, which must be of multiplicity $\le 3$).

Putting the above results together, we see that $L_\perp \in (F(p))^\perp$ has almost real roots, so $\arrank(F(p)) \le \deg L_\perp = s$.
Now $\dim \ker(Q_\ell) = \dim (L)_d^\perp = \dim (L_\perp)_d = d-s+1$, so $\rank(Q_\ell) = d+1 - \dim \ker(Q_\ell) = s$, and by \Cref{cor:psd_restriction}, $\rank(q_\ell) = \rank(Q_\ell) - 2$.
Thus $\rank(\ell) = \rank(q_\ell) = s - 2 \ge \arrank(F(p)) - 2$.
Since this holds for any ray $\ell$ with $\ker(q_\ell)$ basepoint-free, in particular it holds for any extreme ray of $\Sigma_X^\star$ which is not a point evaluation, so $\eta(X) \ge \arrank(F(p)) - 2$ as desired.
\end{proof}

\section{Almost real rank} \label{sec:almostRealRank}

As shown by our main result \Cref{thm:mainThm}, the almost real rank of a form is an interesting quantity to study.
In this final section, we investigate almost real rank of binary forms in general.
To begin, the following proposition characterizes some cases where the almost real rank is small.

\begin{proposition} \label{prop:arrankBasicProp}
Let $d \ge 3$ and $F \in \RR[x,y]_d$, with apolar ideal $(F)^\perp = (F_\perp, F^\circ)$ of type $(d_1, d_2)$.

\begin{enumerate}
    \item $\arrank(F) = d_1 \iff F_\perp$ has almost real roots.
    \item If $\arrank(F) > d_1$, then $\arrank(F) \ge d_2$.
    \item $\arrank(F) = 1 \iff d_1 = 1 \iff \rrank(F) = 1$.
    \item $\arrank(F) = 2 \iff d_1 = 2 \iff \cbrank(F) = 2$.
    \item $\arrank(F) = 3 \iff d_1 = 3$ and $F_\perp$ is not a cube (of a linear form).
\end{enumerate}
(If $d_1 = d_2$, we interpret ``$F_\perp$ has almost real roots'' to mean ``there exists a form in $(F)^\perp_{d_1}$ with almost real roots'', and similarly in (5)).
\end{proposition}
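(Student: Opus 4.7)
The uniform starting point is the structural identity
\[
(F)^\perp_r \;=\; F_\perp \cdot \RR[x,y]_{r-d_1} \qquad \text{for every } r < d_2,
\]
which holds because $(F)^\perp = (F_\perp, F^\circ)$ and $\deg F^\circ = d_2$, combined with the always-true bound $\arrank(F) \ge \cbrank(F) = d_1$ from \Cref{rem:ranksViaApolar}. For (1), if $F_\perp$ has almost real roots then $F_\perp$ itself witnesses $\arrank(F) \le d_1$, and the lower bound gives equality. Conversely, if $\arrank(F) = d_1$, choose an almost real $g \in (F)^\perp_{d_1}$: when $d_1 < d_2$ the space $(F)^\perp_{d_1}$ is the one-dimensional line $\RR \cdot F_\perp$, so $g$ is proportional to $F_\perp$ and $F_\perp$ itself is almost real; when $d_1 = d_2$ the stated interpretation is exactly what $g$ provides.

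For (2), the key technical point is that failing to have almost real roots is preserved under multiplication by any nonzero real form. Almost reality forbids four root-pattern configurations---two complex conjugate pairs, a root of multiplicity $\ge 3$, two distinct multiple roots, or one complex pair together with one multiple real root---and each such defect in $F_\perp$ persists in every multiple $F_\perp \cdot h$. Consequently, if $F_\perp$ does not have almost real roots, the structural identity shows that no element of $(F)^\perp_r$ has almost real roots for any $r < d_2$, forcing $\arrank(F) \ge d_2$.

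For (3)--(5), I unwind (1). Every linear form trivially has almost real roots and every quadratic form has almost real roots (two distinct reals, a complex conjugate pair, or a double real), so (1) directly gives $\arrank(F) = d_1$ whenever $d_1 \in \{1, 2\}$. The identification $\rrank(F) = 1 \iff d_1 = 1$ is standard apolarity (a real $d$-th power has linear apolar generator), and $\cbrank(F) = d_1$ is already recorded in \Cref{rem:ranksViaApolar}. For (5), a binary cubic has almost real roots if and only if it is not a cube of a linear form: each of the root patterns (three distinct real), (one simple real plus a complex conjugate pair), or (one simple real plus a double real) furnishes at least $d-2 = 1$ simple real linear factor, whereas a triple root furnishes none. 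Combining this case analysis with (1), (3), (4) yields (5). The only mildly nontrivial ingredient is the preservation lemma underlying (2), which reduces to an elementary case check on how the root pattern of $F_\perp \cdot h$ inherits defects from $F_\perp$; everything else is bookkeeping around the structural identity and the bound $\arrank(F) \ge d_1$.
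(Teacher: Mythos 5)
Your argument is correct, and since the paper gives no proof (``Omitted''), your write-up is presumably the intended one. The two pillars are exactly right: the always-true bound $d_1 = \cbrank(F) \le \arrank(F)$ together with the observation that for $d_1 \le r < d_2$ one has $(F)^\perp_r = F_\perp \cdot \RR[x,y]_{r-d_1}$, so (1) and (2) reduce to understanding how almost-reality behaves under multiplication. Parts (3)--(5) then unwind (1) via the trivial classifications in low degree (every linear and every quadratic form has almost real roots; a cubic has almost real roots iff it is not a perfect cube).

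One small suggestion on (2): rather than listing the four failure patterns and verifying case-by-case that each ``persists'' (where you also need to remember that a complex conjugate pair of multiplicity $\ge 2$ counts among the first type), it is cleaner to argue by counting. If $G$ has degree $n$ and $h$ has degree $m$, then every simple real linear factor of $Gh$ is a simple real linear factor of $G$ not dividing $h$, or a simple real linear factor of $h$ not dividing $G$; hence the number of simple real linear factors of $Gh$ is at most (that of $G$) $+$ (that of $h$) $\le (n-3) + m = (n+m) - 3$, so $Gh$ fails to have almost real roots. This gives the persistence lemma in one line, with no enumeration of defect types. Either way the conclusion holds, and the rest of your bookkeeping (including the $d_1 = d_2$ interpretation) is sound.
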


\begin{proof}
Omitted.
\end{proof}

\begin{remark} \label{rem:stratification}
One can stratify all degree $d$ binary forms by almost real rank as follows:
write $V_i := H^0(\mathcal{O}_{\PP^1}(i))$ for the vector space of (real) degree $i$ binary forms.
Let $\varphi_{1,d} : \PP(V_1) \to \PP(V_d)$ be the $d^\text{th}$ Veronese map, and for $r \ge 2$, define the map

\begin{align*}
\varphi_{r,d} : \PP(V_2) \times (\PP(V_1))^{r-2} &\times \PP^{r-1} \to \PP(V_d) \\
(q, (l_j)_{j=0}^{r-3}, [c_0 : \ldots : c_{r-1}]) \mapsto &\sum_{j=0}^{r-3} c_j l_j^d + c_{r-2}q_1 + c_{r-1}q_2
\end{align*}
(here $q_1, q_2$ are the degree $d$ forms corresponding to the complex linear factors of the quadric $q$ as in \Cref{sec:construction}, e.g. if $q = l^2$, then $q_1 = l^d$, $q_2 = l_\perp(l)^{d-1}$).
By \Cref{lem:genApolarity}, the image of $\varphi_{r,d}$ is precisely the set of degree $d$ binary forms of almost real rank $\le r$.
Restricting $\varphi_{r,d}$ to the (open) subset where $q, l_0, \ldots, l_{r-3}$ are relatively prime, and removing the image of $\varphi_{r-1, d}$, gives the set of degree $d$ binary forms of almost real rank $= r$.

From this description, one can deduce various structural properties of the set of forms of a given almost real rank.
For instance, $\varphi_{2,d}$ is injective (for $d \ge 3$), so the set of forms with almost real rank $\le 2$ has dimension $3$.
Also, when $r = \lfloor \frac{d+2}{2} \rfloor = \lfloor \frac{d}{2} \rfloor + 1$, $\varphi_{r,d}$ is dominant, corresponding to the fact that the generic type is $(r, d+2-r)$, and among forms of degree $r$, those with almost real roots are typical.
For dimension reasons, this is the least value of $r$ for which $\varphi_{r,d}$ can be dominant, with general fibers of dimension $0$ (resp. $1$) when $d$ is odd (resp. even).
\end{remark}

It is natural to ask what the maximal almost real rank is for binary forms of degree $d$.
This is answered by the next theorem:

\begin{theorem} \label{thm:maxARrank}
For any $d \ge 3$ and $F \in \RR[x,y]_d$, $\arrank(F) \le d-1$.
\end{theorem}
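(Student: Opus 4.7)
\emph{Plan.} The plan is to case-split on the type $(d_1, d_2)$ of the apolar ideal $(F)^\perp = (F_\perp, F^\circ)$, using $d_1 \le d_2$ and $d_1 + d_2 = d+2$. When $d_1 \le 2$, \Cref{prop:arrankBasicProp}(3)(4) immediately gives $\arrank(F) = d_1 \le 2 \le d-1$. When $d_1 \ge 3$ and some element of $(F)^\perp_{d_1}$ has almost real roots, \Cref{prop:arrankBasicProp}(1) gives $\arrank(F) = d_1$; the assumption $d_1 \ge 3$ forces $d \ge 4$ (since $d_1 \le (d+2)/2$), so $d_1 \le (d+2)/2 \le d-1$. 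The only remaining case is $d_1 \ge 3$ with no almost-real form in $(F)^\perp_{d_1}$, where I would explicitly construct a form of degree $d-1$ in $(F)^\perp$ with almost real roots.

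For this main case, I would exploit that $d_1 \ge 3$ implies $d_2 = d + 2 - d_1 \le d-1$, so the apolar ideal already contains elements of degree $\le d-1$ coming from both generators. A Hilbert series computation for $R/(F)^\perp$ yields $\dim(F)^\perp_{d-1} = d-2$. I would pick $d-3$ distinct real points $p_1, \ldots, p_{d-3} \in \PP^1(\RR)$ in general position and consider the subspace
\[
W := (F)^\perp_{d-1} \cap I(\{p_1, \ldots, p_{d-3}\})_{d-1}.
\]
Since vanishing at $d-3$ distinct points of $\PP^1$ cuts out a codimension-$(d-3)$ subspace of $R_{d-1}$ (by \Cref{prop:vanishingIndepCond}), a dimension count gives $\dim W \ge 1$. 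Any $g \in W$ factors as $g = l_1 \cdots l_{d-3} \cdot q$, where $l_i$ is the real linear form vanishing at $p_i$ and $q \in \RR[x,y]_2$ is a real quadratic. Since a real quadratic factors over $\CC$ as either two distinct real linear forms, a conjugate pair of complex linear forms, or a squared real linear form, a direct multiplicity check shows that $g$ has almost real roots unless the cofactor $q$ lies in the \emph{bad set} $\{c \cdot l_i l_j,\, c \cdot l_i^2 : 1 \le i,j \le d-3\}$, which is a set of $\binom{d-2}{2}$ points in $\PP(\RR[x,y]_2) \cong \PP^2$.

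The main obstacle is showing that for some choice of points $p_i$, the cofactor $q$ avoids the bad set. When $\dim W \ge 2$ (which may happen for special configurations), the projectivization $\PP(W / (l_1 \cdots l_{d-3}))$ is a projective subspace of $\PP^2$ of dimension at least $1$ and thus cannot lie in a finite set, so one can select a good $q$. The harder situation is when $\dim W = 1$ for every choice of $(p_i)$; there, I would argue via a genericity argument that the rational map $(p_1, \ldots, p_{d-3}) \mapsto [q] \in \PP^2$ cannot land identically in the bad locus, since each component of the bad locus is zero-dimensional in each fiber and imposes a nontrivial algebraic constraint on the configuration. This step likely requires treating separately the degenerate forms of $F_\perp$ (for instance when $F_\perp$ is a pure power of a linear form or of an irreducible real quadratic), which are precisely the shapes that prevent $F_\perp$ from having almost real roots in the first place.
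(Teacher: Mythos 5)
Your approach differs substantially from the paper's. The paper inducts on $d$: it applies the inductive hypothesis to a directional derivative $D_u(F)$ to obtain $G \in (D_u F)^\perp$ of degree $\le d-2$ with almost real roots, so that $G \cdot l_u \in (F)^\perp$ has degree $d-1$, and then perturbs $G l_u$ by a small multiple of a second apolar form $H$ with simple complex roots to remove any triple or doubled double roots. Your approach is a direct construction: force $d-3$ prescribed simple real roots on a form in $(F)^\perp_{d-1}$ and argue that the remaining quadratic cofactor $q$ can be made to avoid a finite bad set. The set-up is fine: the Hilbert function computation $\dim(F)^\perp_{d-1} = d-2$, the dimension count giving $\dim W \ge 1$, and the identification of the bad set as the $\binom{d-2}{2}$ points $\{[l_i l_j], [l_i^2]\}$ are all correct.

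However, the last step has a genuine gap, which you yourself flag. When $\dim W = 1$ generically, the graph of the rational map $\underline{p} = (p_1, \ldots, p_{d-3}) \mapsto [q]$ is a $(d-3)$-dimensional subvariety of $(\PP^1)^{d-3} \times \PP^2$, and so is each component of the bad locus (e.g. $\{q \equiv l_1 l_2\}$). Equal dimensions mean the graph \emph{could} a priori coincide with a bad component; the observation that the bad locus is ``zero-dimensional in each fiber'' does not rule this out, and ``imposes a nontrivial algebraic constraint'' is precisely the assertion that needs proof, not a reason. To close the gap you would need something like the following: if the graph were contained in, say, $\{q \equiv l_1 l_2\}$, then by irreducibility of the (complexified) graph and Zariski density of the real configurations, one would have $l_1^2 l_2^2 l_3 \cdots l_{d-3} \in (F)^\perp$ for \emph{all} real tuples, and then specializing $p_2, \ldots, p_{d-3} \to p_1$ yields $l^{d-1} \in (F)^\perp$ for all real linear $l$, forcing $(F)^\perp_{d-1} = \RR[x,y]_{d-1}$ of dimension $d$, contradicting $\dim (F)^\perp_{d-1} = d-2$; and similarly for the $\{q \equiv l_1^2\}$ components. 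As written, that argument is absent, so the proposal is incomplete. Filling it seems doable (along the lines above), but it involves real work — arguably comparable to the paper's perturbation lemma, which the inductive framing lets it reuse for \Cref{thm:maxARrankChar} and \Cref{rem:typeDeterminesRank}.
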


\begin{proof}
First we reduce to the case that $1<\crank(F) < d$.
If $\crank(F) = d$, then the apolar ideal $(F)^\perp$ is of type $(2, d)$ (cf. \Cref{rem:ranksViaApolar}), so $\arrank(F) = 2 \le d-1$ by \Cref{prop:arrankBasicProp}(4).
Additionally, if $\crank(F) = 1$, then $\arrank(F) = 1$ as well.
Thus we may assume $2 \le \crank(F) \le d-1$.

We now induct on $d$.
For the base case $d = 3$, the apolar ideal is of type $(2,3)$, so again $\arrank(F) \le 2$.
For the inductive step, choose any direction $u = (u_1, u_2) \in \RR^2$, corresponding to a linear form $l_u(x,y) := u_1x + u_2y$.
Then by induction, the apolar ideal of the directional derivative $D_u(F) = \langle l_u, F \rangle$ contains a form with almost real roots of degree $\le d-2$ (note that $D_u(F) \ne 0$, since $\crank(F) > 1$ by assumption $\implies l_u \not \in (F)^\perp$).
By multiplying an additional factor if necessary, we may choose $G \in (D_u(F))^\perp$ of degree $= d-2$ with almost real roots.
Then $G \cdot l_u \in (F)^\perp$ is of degree $d-1$.
Since $\crank(F) \le d-1$, we may also choose $H \in (F)^\perp$ of degree $= d-1$ with simple complex roots.

We claim that for sufficiently small $\epsilon \in \RR$, the form
$G_\epsilon := G \cdot l_u + \epsilon H \in (F)^\perp$ has almost real roots.
First, observe that there are only finitely many $\epsilon$ such that $G_\epsilon$ does not have simple roots: these are given by the roots of the discriminant of $G_\epsilon$, viewed as a polynomial in $\epsilon$ (note that this polynomial is nonzero, since $H$ has simple roots).
Thus by avoiding these finitely many choices of $\epsilon$, we may assume that $G_\epsilon$ has simple roots, and so it suffices to show that $G_\epsilon$ has at most $1$ pair of complex roots.

For $|\epsilon|$ sufficiently small, any simple root of $G \cdot l_u$ gives a simple root of $G_\epsilon$ (by dehomogenizing we may consider a simple root of a univariate real polynomial, which is e.g. negative to the left of the root and positive to the right, and this is stable under small perturbation). 
Thus we need only consider the following cases: (i) $G \cdot l_u$ has a triple root, and (ii) $G \cdot l_u$ has 2 double roots.
In case (i), since $G_\epsilon$ has simple roots, the triple root of $G \cdot l_u$ induces either 3 distinct real roots of $G_\epsilon$, or 1 real root and 1 complex pair, and since all other roots of $G \cdot l_u$ are real and simple in this case, we get at most $1$ pair of complex roots of $G_\epsilon$.

In case (ii), suppose $G \cdot l_u$ has 2 double roots, and let $p$ be one of these.
If $p$ is a root of $H$, then $p$ is also a root of $G_\epsilon$ for any $\epsilon$, so the double root $p$ of $G \cdot l_u$ induces 2 real roots of $G_\epsilon$ (one of which is $p$, which implies that the other root must be real).
Otherwise, if $p$ is not a root of $H$, then $G \cdot l_u$ will either be nonnegative or nonpositive in a neighborhood of $p$ while $H(p)$ is nonzero, so by choosing the sign of $\epsilon$ appropriately, the double root $p$ of $G \cdot l_u$ will again induce distinct real roots of $G_\epsilon$.
Hence in either case the other double root of $G \cdot l_u$ gives at most $1$ complex pair of roots of $G_\epsilon$.
\end{proof}

\begin{remark} \label{rem:typeDeterminesRank}
There are some instances in which the type of the apolar ideal determines the almost real rank.
Some cases of this are listed in \Cref{prop:arrankBasicProp}.
Another example of this occurs in degree 6: if a real binary sextic $F$ has an apolar ideal of type $(4,4)$, then $\arrank(F) = 4$.
To see this, note that if $F_\perp, F^\circ$ were both $4^\text{th}$ powers, then $F_\perp - F^\circ$ has almost real roots.
Moreover, if both $F_\perp$ and $F^\circ$ have two pairs of complex roots, then $F_\perp, F^\circ$ is globally positive, in which case a suitable $\RR$-linear combination of $F_\perp$, $F^\circ$ has at least a pair of real roots.
Thus without loss of generality $F_\perp$ has at most 1 root of multiplicity 3, or 2 double roots, or 1 double root and 1 complex pair of roots, and by the reasoning in the proof of \Cref{thm:maxARrank}, there exists a form in $(F)^\perp_4$ with almost real roots.
\end{remark}

We next characterize when the maximal almost real rank of $d-1$ is achieved, which serves as a converse of \Cref{thm:maxARrank}:

\begin{theorem} \label{thm:maxARrankChar}
Let $d \ge 5$ and $F \in \RR[x,y]_d$.
Then $\arrank(F) = d-1 \iff F_\perp$ is a cube of a linear form $\iff (F)^\perp$ contains a cube of a linear form (but no quadratic forms).
\end{theorem}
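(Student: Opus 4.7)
The plan is to prove the three conditions cyclically using the apolar type $(d_1,d_2)$ of $F$, where $d_1+d_2=d+2$ and $d_1 \le d_2$. The equivalence (ii)$\iff$(iii) is immediate: if $F_\perp=l^3$ then $d_1=3$, giving both clauses of (iii); conversely the two clauses of (iii) force $d_1=3$ exactly, and since $d\ge 5$ implies $d_1<d_2$, the form $F_\perp$ spans $(F)^\perp_3$ (cf.\ \Cref{def:apolarGens}), so any cube in $(F)^\perp$ is a scalar multiple of $F_\perp$. For (ii)$\Rightarrow$(i), assume $F_\perp=l^3$: then $F_\perp$ has no almost real roots (its only root is a triple, not a simple real factor), so \Cref{prop:arrankBasicProp}(1) gives $\arrank(F)\ne d_1=3$, and \Cref{prop:arrankBasicProp}(2) then gives $\arrank(F)\ge d_2=d-1$; combined with the upper bound $\arrank(F)\le d-1$ from \Cref{thm:maxARrank}, we obtain (i).

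The heart of the argument is (i)$\Rightarrow$(ii). Assuming $\arrank(F)=d-1$, \Cref{prop:arrankBasicProp}(3--5) force $d_1\ge 3$, and since $d\ge 5$ the possibility $\arrank(F)=d_1$ would require $d_1=d-1>d_2$, impossible. Thus $\arrank(F)>d_1$, so by \Cref{prop:arrankBasicProp}(1) $F_\perp$ lacks almost real roots, and by \Cref{prop:arrankBasicProp}(2) $d_2\le d-1$, i.e., $d_1\ge 3$. In the case $d_1=3$, a real binary cubic without almost real roots must be a cube of a linear form (a real cubic always has a real root, and the absence of a simple real root forces that root to be a triple), yielding (ii) directly.

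The main obstacle is ruling out $d_1\ge 4$. I plan to show that in this regime one can always find a form of degree $\le d-2$ in $(F)^\perp$ with almost real roots, contradicting $\arrank(F)=d-1$. The search takes place inside the $(d_2-d_1+2)$-dimensional space $(F)^\perp_{d_2}$. When $d_1<d_2$, this space has dimension $\ge 3$, providing enough freedom to adapt the perturbation technique from the proof of \Cref{thm:maxARrank}: starting from a candidate like $F^\circ$ or $x^{d_2-d_1}F_\perp$, one adds small multiples of other basis elements of $(F)^\perp_{d_2}$, relying on the fact that each such perturbation can split a double real root, collapse a complex conjugate pair, or otherwise remove undesired bad root behavior, as in the case analysis at the end of the proof of \Cref{thm:maxARrank}. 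The subtlest subcase is the equal type $d_1=d_2$ (only possible for even $d\ge 6$), where $(F)^\perp_{d_1}$ is only the 2-dimensional pencil spanned by $F_\perp$ and $F^\circ$; here I plan to proceed exactly as in \Cref{rem:typeDeterminesRank}, splitting on the real root structures of $F_\perp$ and $F^\circ$ (both pure powers, both positive definite, or at least one already with almost real roots) and exhibiting an explicit linear combination that has almost real roots in each case. The technical work lies in managing these root-structure cases cleanly for arbitrary $d$, generalizing the sextic analysis of \Cref{rem:typeDeterminesRank}.
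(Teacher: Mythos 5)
Your treatment of the easy implications is fine and matches the paper: the equivalence (ii)$\iff$(iii) follows from the type, (ii)$\Rightarrow$(i) follows from \Cref{prop:arrankBasicProp}(1,2) and the upper bound from \Cref{thm:maxARrank}, and the case $d_1=3$ of (i)$\Rightarrow$(ii) is correctly reduced to the elementary fact that a real binary cubic with no simple real linear factor must be a cube.

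However, the heart of (i)$\Rightarrow$(ii) --- ruling out $d_1 \ge 4$ --- is left as a ``plan,'' and the plan has a genuine gap. You propose to perturb a candidate such as $F^\circ$ or $x^{d_2 - d_1} F_\perp$ inside $(F)^\perp_{d_2}$, asserting that the $\ge 3$ dimensions of freedom let one ``collapse a complex conjugate pair'' or split a double root. Two problems. First, a \emph{simple} complex conjugate pair cannot be collapsed by a small perturbation: the roots depend continuously on the coefficients, so a pair bounded away from the real axis stays nonreal under any sufficiently small perturbation. So if your starting candidate has two or more simple complex pairs --- and nothing prevents $F^\circ$ and $F_\perp$ from both being definite, e.g.\ products of positive quadrics --- perturbation within $(F)^\perp_{d_2}$ will not help. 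Second, the perturbation machinery in the proof of \Cref{thm:maxARrank} is not a generic tool: it crucially relies on a \emph{starting form that is already close to having almost real roots}, namely $G \cdot l_u$ where $G$ has almost real roots. That is exactly what induction on the degree provides in the paper. The paper's actual argument for $d_1 \ge 4$ observes that at least one of $(D_x F)^\perp$, $(D_y F)^\perp$ contains no cube (two independent quartics of the form $x l_1^3$, $y l_2^3$ would force $d_1 \le 3$), applies the inductive hypothesis to that derivative to obtain $g$ of degree $\le d-3$ with almost real roots, and then perturbs $xg \in (F)^\perp_{\le d-2}$ by $F^\circ$. Without this inductive step supplying a near-almost-real starting point, the perturbation idea has nothing to grab onto. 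Your plan for the equal-type case $d_1=d_2$ (generalizing \Cref{rem:typeDeterminesRank}) is also not carried out, but this is secondary: the main missing ingredient is the reduction to lower degree via directional derivatives.
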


\begin{proof}
If $F_\perp$ is a cube of a linear form, then $(F)^\perp$ is of type $(3, d-1)$ and $\arrank(F) \ge d-1$ by \Cref{prop:arrankBasicProp}(2, 5).
Conversely, we show that if $d \ge 5$ and $(F)^\perp$ contains no cubes, then $\arrank(F) \le d-2$, by induction on $d$.

We first rule out small types: let $(d_1, d_2)$ be the type of $(F)^\perp$.
If $d_1 \le 3$, then (with the assumptions of no cubes) $\arrank(F) \le d-2$ by \Cref{prop:arrankBasicProp}.
This is enough to cover the base case $d = 5$, and by \Cref{rem:typeDeterminesRank}, this also covers the case $d = 6$.
Thus we assume for the remainder of the proof that $d_1 \ge 4$.

Now suppose $F$ is a form of degree $d \ge 7$.
Note that either $(D_x(F))^\perp$ or $(D_y(F))^\perp$ does not contain a cube of a linear form: if not, say $l_1^3 \in (D_x(F))^\perp$ and $l_2^3 \in (D_y(F))^\perp$, then $(F)^\perp$ would contain 2 independent quartics $xl_1^3, yl_2^3$, which can only happen if $d_1 \le 3$ (since $d_1 = 4 \implies d_2 = d-2 \ge 5$), which has already been covered.
Without loss of generality we may assume $(D_x(F))^\perp$ does not contain a cube of a linear form.
By induction, there is a form $g \in (D_x(F))^\perp$ of degree $\le d-3$ with almost real roots.
Then $xg \in (F)^\perp$ is of degree $\le d-2$, and since $F^\circ \in (F)^\perp$ has degree $\le d-2$ as well, the reasoning in the proof of \Cref{thm:maxARrank} shows that there exists a form in $(F)^\perp_{d-2}$ with almost real roots.
\end{proof}

The characterization above yields sharp bounds on the Hankel index for the curves studied in this paper:

\begin{corollary}
Let $X = \pi_p(C_d)$ be a projection of a rational normal curve $C_d$ away from a point $p \in \PP^d \setminus C_d^3$.
Then $2 \le \eta(X) \le d-4$.
In particular, if $d = 6$, then $\eta(X) = 2$.
\end{corollary}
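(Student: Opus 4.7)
The plan is to combine \Cref{thm:mainThm}, which identifies $\eta(X)$ with $\arrank(F(p)) - 2$, with the structural results on almost real rank of binary forms developed in \Cref{sec:almostRealRank}. The hypothesis $p \notin C_d^3$ enters purely through a lower bound on $\cbrank(F(p))$, which both pins $\arrank(F(p))$ away from small values and obstructs the extremal case of \Cref{thm:maxARrankChar}. Note that $p \notin C_d^3$ already forces $d \ge 6$, so both \Cref{thm:maxARrank} ($d \ge 3$) and \Cref{thm:maxARrankChar} ($d \ge 5$) are available.

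For the lower bound $\eta(X) \ge 2$, I would first translate $p \notin C_d^3$ via the identification $\rk_{C_d}(p) = \cbrank(F(p))$ from \Cref{ssec:curves}, yielding $\cbrank(F(p)) \ge 4$. The inequality $\cbrank(F) \le \arrank(F)$ recorded immediately after \Cref{def:almostreal} then gives $\arrank(F(p)) \ge 4$, and \Cref{thm:mainThm} converts this to $\eta(X) \ge 2$.

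For the upper bound $\eta(X) \le d-4$, \Cref{thm:maxARrank} provides the a priori estimate $\arrank(F(p)) \le d-1$, so the task reduces to ruling out equality. By \Cref{thm:maxARrankChar}, equality $\arrank(F(p)) = d-1$ forces $F(p)_\perp$ to be the cube of a linear form, hence $\deg F(p)_\perp = 3$. But the formula $\cbrank(F) = \deg F_\perp$ from \Cref{rem:ranksViaApolar} combined with the previous paragraph gives $\deg F(p)_\perp \ge 4$, a contradiction. Therefore $\arrank(F(p)) \le d-2$, and \Cref{thm:mainThm} yields $\eta(X) \le d-4$. Specializing to $d=6$, the two bounds coincide and give $\eta(X) = 2$.

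I do not anticipate any serious obstacle: the corollary is a clean bookkeeping consequence of \Cref{thm:mainThm,thm:maxARrank,thm:maxARrankChar} together with the border-rank dictionary for the hypothesis $p \notin C_d^3$.
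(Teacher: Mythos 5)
Your argument is correct and follows essentially the same route as the paper: both derive the upper bound from \Cref{thm:mainThm} together with \Cref{thm:maxARrankChar}, ruling out $\arrank(F(p)) = d-1$ because a cube in $(F(p))^\perp$ would force $\cbrank(F(p)) \le 3$, equivalently $p \in C_d^3$. For the lower bound $\eta(X) \ge 2$, the paper simply reads this off from the definition of Hankel index (rank-one extreme rays of $\Sigma_X^\star$ are exactly those of $P_X^\star$), while your route via $\arrank(F(p)) \ge \cbrank(F(p)) \ge 4$ and \Cref{thm:mainThm} is heavier than necessary but equally valid.
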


\begin{proof}
If $\arrank(F(p)) = d-1$, then $(F(p))^\perp$ contains a cube by \Cref{thm:maxARrankChar}.
By apolarity, this implies that $p \in C_d^3$, a contradiction.
Thus $\arrank(F(p)) \le d-2$, so $\eta(X) \le d - 4$ by \Cref{thm:mainThm}.
\end{proof}

As preparation for determining the typical almost real ranks, it is useful to know explicit forms which attain a given almost real rank.
We thus compute the various ranks of monomials $x^{d-i}y^i \in \RR[x,y]_d$.
When $i = 0$, $x^{d-i}y^i = x^d$ is a power of a linear form, hence has real (and complex) [border] rank 1.
By symmetry, we may therefore assume $1 \le i \le \lfloor \frac{d}{2} \rfloor$.
In general, the apolar ideal is
\[
(x^{d-i}y^i)^\perp = (y^{i+1}, x^{d-i+1}).
\]
From this we see that $\cbrank(x^{d-i}y^i) = i+1$ and $\crank(x^{d-i}y^i) = d-i+1$ (cf. \Cref{rem:ranksViaApolar}).
Since $x^{d-i}y^i$ has all real roots, we also have $\rrank(x^{d-i}y^i) = d$.

\begin{proposition} \label{prop:arrkMonomial}
For $d \ge 1$ and $0 \le i \le \lfloor \frac{d}{2} \rfloor$,
\[
\arrank(x^{d-i}y^i) = 
\begin{cases}
1 & \textup{if } i = 0 \\
2 & \textup{if } i = 1 \\
d-1 & \textup{if } i = 2 \\
d-2 & \textup{otherwise}.
\end{cases}
\]
\end{proposition}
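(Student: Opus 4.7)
The plan is to split into cases based on the value of $i$. For $i\le 2$ the result follows quickly from structural facts already established; for $i\ge 3$ the main work is a lower bound argument that combines \Cref{thm:maxARrankChar} with iterated applications of Rolle's theorem, organized as an induction on $d$.

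For $i=0$ and $i=1$, the apolar ideals are $(y,x^{d+1})$ and $(y^2,x^d)$, so \Cref{prop:arrankBasicProp}(3) and (4) immediately give $\arrank=1$ and $\arrank=2$. For $i=2$, the apolar ideal $(y^3,x^{d-1})$ has generator $y^3$, a cube of a linear form; \Cref{thm:maxARrankChar} yields $\arrank=d-1$ for $d\ge 5$, and the remaining small case $d=4$ (of type $(3,3)$) is handled directly by noting that $x^3-y^3=(x-y)(x^2+xy+y^2)\in(x^2y^2)^\perp_3$ has almost real roots. For $i\ge 3$ (so $d\ge 2i\ge 6$), the upper bound $\arrank\le d-2$ is immediate from \Cref{thm:maxARrankChar}, since $F_\perp=y^{i+1}$ has degree $i+1\ge 4$ and so is not a cube of a linear form.

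For the lower bound $\arrank\ge d-2$ when $i\ge 3$, I would induct on $d$, the base case $d=6,\,i=3$ being trivial since $(y^4,x^4)$ contains no forms of degree $<4$. For the inductive step, take $g\in(F)^\perp_r$ with $r\le d-3$ and write $g=Ay^{i+1}+Bx^{d-i+1}$. If $B=0$ (which is forced whenever $r<d-i+1$), then $y^{i+1}\mid g$ gives a root of multiplicity $\ge i+1\ge 4$, precluding almost real roots. If $B\ne 0$ but the coefficient $b_0$ of $x^r$ in $g$ vanishes, then $y\mid g$, so $g_1:=g/y\in(x^{d-i}y^{i-1})^\perp$ is almost real of degree $r-1$; the inductive hypothesis applied to the degree-$(d-1)$ monomial $x^{d-i}y^{i-1}$ yields $\arrank(x^{d-i}y^{i-1})\ge d-3$, so $r-1\ge d-3$, contradicting $r\le d-3$. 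If $b_0\ne 0$, then $\tilde g(x):=g(x,1)$ has degree $r$ and at least $r-2$ distinct real roots (since $[1:0]$ is not a root of $g$). Iterating Rolle's theorem $r-i$ times, $\tilde g^{(r-i)}$ has $\ge i-2$ distinct real roots. But $\deg A(x,1)=r-i-1<r-i$, so the $A$-part is annihilated, yielding $\tilde g^{(r-i)}(x)=x^{d-r+1}q(x)$ with $\deg q=r+i-d-1$; this polynomial has at most $r+i-d$ distinct real roots, forcing $i-2\le r+i-d$, i.e., $r\ge d-2$, again a contradiction.

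The main obstacle is the Rolle count in the third subcase, where the key structural input is the ``gap'' in the monomial support of forms in $(y^{i+1},x^{d-i+1})$: after setting $y=1$, such a form is missing all $x^k$ terms for $r-i\le k\le d-i$. This gap is precisely what allows $(r-i)$-fold differentiation to reduce $\tilde g$ to $x^{d-r+1}$ times a strictly controlled low-degree polynomial with too few real roots. Combining this with the multiplicity argument when $y^{i+1}\mid g$ and the inductive reduction when $y\mid g$ but $y^{i+1}\nmid g$ should exhaust all forms of degree $\le d-3$.
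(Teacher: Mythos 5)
Your proof is correct, and the lower-bound argument for $i\ge 3$ takes a genuinely different route from the paper's. The paper writes out a general degree-$(d-3)$ form in $(y^{i+1},x^{d-i+1})$, observes that its coefficient list has a gap of length four, and applies Descartes' Rule of Signs to conclude there are at most $d-7$ distinct nonzero real roots, handling $d=6,7$ separately via \Cref{rem:typeDeterminesRank} and \Cref{prop:arrankBasicProp}. You instead induct on $d$, splitting on whether $y\mid g$: if so, you divide by $y$ and invoke the inductive hypothesis on the degree-$(d-1)$ monomial $x^{d-i}y^{i-1}$; if not, you dehomogenize and iterate Rolle's theorem $r-i$ times (which annihilates the $Ay^{i+1}$ summand) to force $r\ge d-2$. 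Both arguments ultimately exploit the same gap in the monomial support of $(y^{i+1},x^{d-i+1})$, but your Rolle count is more elementary and folds the small degrees uniformly into the induction with base case $d=6$ rather than treating $d=7$ ad hoc, at the cost of the extra case split on divisibility by $y$. Two small points worth spelling out in a final write-up: (a) in the $y\mid g$ subcase you need that $g_1:=g/y$ still has almost real roots, which holds in each of the three almost-real classes for $g$ (including when $[1:0]$ is the double root of $g$, in which case $g_1$ has all simple real roots); and (b) in the Rolle subcase you should note that $q\not\equiv 0$ because its leading coefficient is proportional to $b_0\ne 0$, so the degree bound on $q$ genuinely bounds its root count.
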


\begin{proof}
The cases $i = 0, 1$ follow from \Cref{prop:arrankBasicProp}; the case $i = 2$ is covered by \Cref{thm:maxARrankChar}.
This includes all cases with $d \le 5$.

It thus suffices to show that if $d \ge 6$ and $3 \le i \le \lfloor \frac{d}{2} \rfloor$, then $\arrank(x^{d-i}y^i) > d-3$.
The cases $d = 6$ (resp. $d = 7$) are covered by \Cref{rem:typeDeterminesRank} (resp. \Cref{prop:arrankBasicProp}).
Now suppose $d \ge 8$.
Every form of degree $d-3$ in $(x^{d-i}y^i)^\perp$ can be expressed as
\[
a_{0}x^{d-3} + \ldots + a_{i-4}x^{d-i+1}y^{i-4} + b_{d-i-4}x^{d-i-4}y^{i+1} + \ldots + b_0y^{d-3}
\]
with $(i-3) + (d-i-3) = d - 6$ coefficients $a_0, \ldots, a_{i-4}, b_{d-i-4}, \ldots, b_0 \in \RR$, where we take no $a_i$'s if $i = 3$ (so that the support of this polynomial has a gap of size 4).
By the Descartes' Rule of Signs, the number of distinct nonzero real roots of this polynomial is at most the number of sign changes betwen adjacent coefficients, hence is $\le d - 7$.
Thus $\arrank(x^{d-i}y^i) > d-3$, and so Theorems \ref{thm:maxARrank} and \ref{thm:maxARrankChar} imply that $\arrank(x^{d-i}y^i) = d-2$.
\end{proof}

In particular, we see that for monomial projections, the almost real rank is essentially independent of $i$ (and depends only on whether $X_i$ is contained in the rational normal surface scroll $S(1, d-3)$), and is much larger than the complex border rank (with a gap of at least $\lceil \frac{d}{2} \rceil - 3$, hence the gap is unbounded as $d \to \infty$).

An amusing corollary of \Cref{prop:arrkMonomial} is the existence, in any degree $\ge 4$, of univariate real polynomials with almost real roots whose supports have a gap of size 3, i.e. the Rule of Signs bound is sharp for these polynomials (athough the existence of such polynomials is not sufficient to prove \Cref{prop:arrkMonomial}).
For more on the sharpness of the Rule of Signs bound, cf. \cite{MR1732666}.




Finally, we consider the problem of determining which almost real ranks are typical.
Our presentation follows that of \cite{MR3348173}.
Recall that a property $P$ of degree $d$ forms is said to be \emph{typical} if, on identifying the set of degree $d$ forms with $\RR^{d+1}$, there is a nonempty Euclidean open set of degree $d$ forms all of which have property $P$.
We say that an almost real rank $r$ is typical if the property ``has almost real rank $= r$'' is typical.
For $F\in\RR[x,y]_d$, we say that $F$ is a \emph{typical form of almost real rank} $r$ if $F$ lies in an open set of $\RR[x,y]_d$ which consists of forms of almost real rank $r$.

Note that the condition ``$(F)^\perp$ contains a cube'' in \Cref{thm:maxARrankChar} is equivalent to saying that $F$ has a real root of multiplicity $\ge d-2$, which is not a typical property.
It follows that $d-1$ is not a typical almost real rank.
Moreover, \Cref{rem:stratification} implies that any $r < \lfloor\frac{d+2}{2}\rfloor$ cannot be a typical almost real rank.
It turns out that these are the only obstructions for an almost real rank to be typical, as will be shown in \Cref{thm:typr}.
To this end, we first characterize the typical forms of a given almost real rank:

\begin{lemma}\label{lem:typicalForms}
Let $F\in\RR[x,y]_d$ with $(F)^\perp$ of generic type, and set $r = \arrank F$.
Then $F$ is a typical form of almost real rank $r$ if and only if all forms in $(F)^\perp_{r-1}$ have at least two pairs of complex roots (counted with multiplicity).
\end{lemma}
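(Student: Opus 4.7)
My plan is to prove both directions by analyzing how the apolar subspace $(F')^\perp_{r-1}$ and the almost real locus in $\RR[x,y]_{r-1}$ interact under perturbations of $F$. The two key analytic inputs are: (a) since $(F)^\perp$ has generic type, the same holds on an open neighborhood of $F$, and $(F')^\perp_{r-1}$ varies continuously (in the appropriate Grassmannian) with $F'$; (b) the condition ``has at least two pairs of complex roots with multiplicity'' is \emph{open} on $\RR[x,y]_{r-1}$, because non-real roots of a polynomial are stable under small perturbations of the coefficients, so the total multiplicity of non-real roots is lower semicontinuous. Dually, the closure of the almost real locus is exactly the set of forms with at most one complex pair (counted with multiplicity); the nontrivial inclusion is established by splitting a multiple real factor $(x - ay)^k$ into a product $\prod_{i=1}^{k}(x - (a + \epsilon_i)y)$ of distinct simple factors.

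For the sufficiency direction ($\Leftarrow$), assuming every form in $(F)^\perp_{r-1}$ has $\ge 2$ complex pairs, I prove $\arrank(F') = r$ for $F'$ near $F$ via two inequalities. For $\arrank(F') \ge r$: compactness of $\PP((F)^\perp_{r-1})$ together with openness of the ``$\ge 2$ pairs'' condition yields a uniform neighborhood of $F$ on which $\PP((F')^\perp_{r-1})$ remains disjoint from the forms with $\le 1$ complex pair (hence from almost real forms); this propagates to smaller degrees $n < r-1$ automatically, because multiplying $h \in (F')^\perp_n$ by $x^{r-1-n}$ gives an element of $(F')^\perp_{r-1}$ with the same non-real roots as $h$. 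For $\arrank(F') \le r$: choose almost real $G \in (F)^\perp_r$ and let $G' \in (F')^\perp_r$ be its orthogonal projection; each of the three cases in \Cref{def:almostreal} is preserved under small perturbation (in case (iii) the double real root splits into either two simple real roots or one simple complex pair, both still almost real), so $G'$ is almost real.

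For the necessity direction ($\Rightarrow$), I argue the contrapositive. Suppose some $G \in (F)^\perp_{r-1}$ has $\le 1$ complex pair; since $\arrank F = r$, $G$ is not itself almost real, but $G$ lies in the closure of the almost real locus by the splitting construction, so pick almost real $G_\epsilon \to G$ in $\RR[x,y]_{r-1}$. The linear condition ``$G_\epsilon \in (\text{--})^\perp$'' cuts out the $(r-1)$-dimensional subspace $V_{G_\epsilon}^\perp \subseteq \RR[x,y]_d$, where $V_{G_\epsilon} := G_\epsilon \cdot \RR[x,y]_{d-r+1}$; this subspace varies continuously with $G_\epsilon$, and $F \in V_G^\perp$ by hypothesis. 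Letting $F_\epsilon$ be the orthogonal projection of $F$ onto $V_{G_\epsilon}^\perp$, we obtain $F_\epsilon \to F$ while $G_\epsilon \in (F_\epsilon)^\perp$ is almost real of degree $r - 1$, so $\arrank(F_\epsilon) \le r - 1 < r$. Every neighborhood of $F$ therefore contains a form of almost real rank strictly less than $r$, violating typicality.

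I expect the main obstacle to be the closure claim --- verifying that every form with at most one complex pair is a limit of almost real forms requires handling all possible root-multiplicity patterns on the real factors, though the explicit factor-splitting recipe handles every case uniformly. The rest amounts to standard continuity and compactness arguments, plus the Grassmannian-level observation that orthogonal projection yields a nearby $F_\epsilon$ in the required linear slice.
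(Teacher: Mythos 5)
Your proof is correct and follows essentially the same route as the paper's: both directions rest on continuity of the graded pieces of the apolar ideal in the coefficients of $F$, lower semicontinuity of the count of nonreal roots (so that ``$\ge 2$ conjugate pairs'' is open), and realizing the perturbed center $F_\epsilon$ inside the orthogonal complement of $(g_\epsilon)_d$ --- which is exactly your $V_{G_\epsilon}^\perp$. You additionally spell out two steps the paper leaves implicit, namely the degree-propagation argument (multiplying a putative almost real $h \in (F')^\perp_n$ by $x^{r-1-n}$ to contradict the condition in degree $r-1$) and the compactness-of-$\PP((F)^\perp_{r-1})$ argument giving a uniform neighborhood, but the underlying ideas match the paper's proof.
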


\begin{proof}
Suppose that $F$ is typical of almost real rank $r$, and there exists $g \in (F)^\perp_{r-1}$ such that $g$ has at most one pair of complex roots.
In any $\epsilon$-neighborhood of $g$ there exists a form $g_\epsilon$ such that $g_\epsilon$ has almost real roots. 
For any $\epsilon > 0$ we have $\dim (g_\epsilon)_d = \dim (g)_d=d-r+2$, and as $\epsilon$ approaches $0$, $(g_\epsilon)_d$ approaches $(g)_d$.
Therefore the orthogonal complement of $(g_\epsilon)_d$ also approaches the orthogonal complement of $(g)_d$ as $\epsilon$ goes to $0$.
We conclude that in any neighborhood of $F$ there exist forms of almost real rank at most $r-1$, which is a contradiction. 

Conversely, let $F\in\RR[x,y]_d$ with $(F)^\perp$ of generic type and $\arrank F = r$.
Suppose that all forms in $(F)^\perp_{r-1}$ have at least two pairs of complex roots.
For $\epsilon > 0$ sufficiently small, the $\epsilon$-neighborhood of $F$ contains only forms with apolar ideals of generic type (as having non-generic type is a Zariski-closed condition). 
For such $\epsilon$, fix $F_\epsilon$ in the $\epsilon$-neighborhood of $F$.
Within this neighborhood, the ideal $(F_\epsilon)^\perp$ (i.e. the sequence of graded components of $(F_\epsilon)^\perp$) depends continuously on the coefficients of $F_\epsilon$.
Now both conditions ``all forms in $(F)^\perp_{r-1}$ have at most one pair of complex roots'' and ``there exists a form in $(F)^\perp_r$ with almost real roots'' are stable under sufficiently small perturbation, which shows that $F$ is typical of almost real rank $r$.
\end{proof}

\begin{theorem}\label{thm:typr}
For $d \ge 5$, any $r$ with $\lfloor\frac{d+2}{2}\rfloor \le r \le d-2$ is a typical almost real rank.
\end{theorem}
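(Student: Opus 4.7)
The plan is to apply \Cref{lem:typicalForms}: for each $r$ in the stated range, I construct a form $F^* \in \RR[x,y]_d$ whose apolar ideal $(F^*)^\perp$ has generic type $(d_1, d_2) := (\lfloor (d+2)/2 \rfloor, \lceil (d+2)/2 \rceil)$, satisfies $\arrank(F^*) = r$, and has the property that every form in $(F^*)^\perp_{r-1}$ has at least two pairs of complex roots. The lemma then supplies an open neighborhood of $F^*$ on which the almost real rank is exactly $r$.

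For the minimum value $r = d_1$, take any $F^*$ of generic type with $F^*_\perp$ having almost real roots (for instance $F^*_\perp = \prod_{i=1}^{d_1}(x - iy)$): by \Cref{prop:arrankBasicProp}(1), $\arrank(F^*) = d_1$, and the hypothesis of \Cref{lem:typicalForms} is vacuous since $(F^*)^\perp_{d_1-1} = 0$. For $d_1 < r \le d-2$ (so $d \ge 7$ and $d_1 \ge 4$), begin by taking $F^*_\perp$ with at least two pairs of complex roots, e.g. a product of distinct irreducible real quadratics (and one spare linear factor if $d_1$ is odd). This forces $\arrank(F^*) \ge d_2$, because every element of $(F^*)^\perp_k$ for $d_1 \le k < d_2$ is a multiple of $F^*_\perp$ and inherits its complex pairs. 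In the sub-case $r = d_2$ (the case $r = d_1 + 1$ for odd $d$), additionally choose $F^{*\circ}$ with almost real roots; then $\arrank(F^*) = d_2 = r$ and the typicality condition is met since $(F^*)^\perp_{r-1}$ is spanned by $F^*_\perp$.

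The remaining, more delicate sub-case is $d_2 < r \le d - 2$ (requiring $d \ge 8$). Here I fix a target form $g^* \in \RR[x,y]_r$ with almost real roots, e.g. $g^* = \prod_{i=1}^{r-2}(x - a_i y) \cdot q^*$ with distinct real $a_i$ and $q^*$ irreducible. By \Cref{lem:apolarMembershipSameDegree}, the condition $g^* \in (F^*)^\perp$ is equivalent to $F^*$ lying in the $r$-dimensional subspace $W_{g^*} := (g^*)_d^\perp \subseteq V_d$ (with respect to the apolar pairing on $V_d$). The goal is to show that a generic $F^* \in W_{g^*}$ has apolar ideal of generic type and satisfies the hypothesis of \Cref{lem:typicalForms}. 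Each of these requirements is an open condition on $W_{g^*}$, so it suffices to exhibit a single anchor form satisfying them. Such an anchor can be built using \Cref{rem:stratification} by applying $\varphi_{r,d}$ to a sufficiently ``complex-dominated'' input (irreducible quadric $q$, real linear forms $l_j$ in generic position, and coefficients $c_j$ tuned so that the resulting form lies in $W_{g^*}$).

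\textbf{The main obstacle} is verifying, for this last sub-case, that the anchor $F^*$ can be chosen so that \emph{every} form in the intermediate pencils $(F^*)^\perp_{d_2}, \ldots, (F^*)^\perp_{r-1}$ has $\ge 2$ complex pairs. The dimensions of these pencils grow with degree, so this is a conjunction of increasingly restrictive open conditions. The resolution relies on two facts: (i) when $F^*_\perp$ and $F^{*\circ}$ are chosen ``very complex'' (many well-separated complex-conjugate pairs, e.g. powers or products of fixed positive-definite quadratics), small perturbations preserve the complex-pair structure in each graded piece of $(F^*)^\perp$; and (ii) the subspace $W_{g^*} \subseteq V_d$ is of dimension $r$, large enough to admit such choices alongside the constraint $g^* \in (F^*)^\perp$, as can be confirmed by a tangent-space surjectivity computation for $\varphi_{r,d}$ at the anchor point.
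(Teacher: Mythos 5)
Your reduction to \Cref{lem:typicalForms} and the case analysis for $r=d_1$ and $r=d_2$ (odd $d$) are correct and essentially an inlined version of the small-degree checks in the paper. The trouble is the third sub-case $d_2 < r \le d-2$, which you flag yourself as ``the main obstacle'': what you offer there is a plan, not a proof. Two things are missing. First, you must show that the open conditions you describe on $W_{g^*}$ are \emph{nonempty}, i.e.\ that an anchor form actually exists; appealing to $\varphi_{r,d}$ applied to ``complex-dominated'' inputs does not do this. Second, even granting an anchor, your ``fact (i)'' about products/powers of positive definite quadratics is not obviously true: choices like $F^*_\perp = q_1^a$, $F^{*\circ} = q_2^b$ generate graded pieces $(F^*)^\perp_k$ for $d_2 \le k \le r-1$ whose elements are $A q_1^a + Bq_2^b$ for non-constant $A,B$, and it is not automatic that all such combinations retain two conjugate pairs (indeed positive combinations of positive forms are positive, but differences and generic mixtures are not, and your claim effectively asserts a strong nonnegativity-type property of a full linear system). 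So the heart of the statement is left unproved.

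The paper sidesteps exactly this difficulty by an induction on the \emph{degree} $d$ rather than on the rank: first it establishes the anchor $r = d-2$ directly for every $d$ (via \Cref{rem:typeDeterminesRank} for small $d$ and the balanced monomial $x^{\lceil d/2\rceil}y^{\lfloor d/2\rfloor}$, whose typicality follows from the Descartes argument in \Cref{prop:arrkMonomial}, for $d\ge 8$). Then, given a \emph{typical} form $F$ of degree $d$ and almost real rank $r$, with a witness $S = C_1 F_\perp + C_2 F^\circ \in (F)^\perp_r$ of almost real roots, it constructs a degree $d+1$ form $H$ with $(H)^\perp \subseteq (F)^\perp$ of generic type and $S \in (H)^\perp$. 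Because $(H)^\perp_{r-1} \subseteq (F)^\perp_{r-1}$, the hypothesis of \Cref{lem:typicalForms} for $H$ is \emph{inherited} from $F$ for free; the only work is a short algebraic construction of $H$ (the choice of the linear form $L$, or the scalar $\alpha$, so that $C_1 - LC_2$ has an extra root). This is precisely the step that dissolves the conjunction of ``increasingly restrictive open conditions'' you were stuck on. If you want to keep a direct (non-inductive-in-$d$) approach, you would need an explicit family of anchors whose apolar ideals you control at \emph{every} degree from $d_1$ through $r-1$; without that, the sub-case $d_2 < r \le d-2$ is a genuine gap.
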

\begin{proof}
We first show that $d-2$ is always a typical almost real rank.
By \Cref{thm:maxARrank} and \Cref{thm:maxARrankChar}, it suffices to show that for each $d \ge 5$, there exists a nonempty open set of degree $d$ forms with almost real rank $> d-3$.
For $5 \le d \le 7$, we may verify this directly: if $d = 5$, then a general form (which is of type $(3,4)$) has almost real rank $3$; the case $d = 6$ is covered by \Cref{rem:typeDeterminesRank}; and for $d = 7$, there is an nonempty open set of forms $F$ of type $(4, 5)$ for which $F_\perp$ has only complex roots (i.e. is a product of 2 strictly positive quadrics).
For $d \ge 8$, it follows from \Cref{lem:typicalForms} and the proof of \Cref{prop:arrkMonomial} that the ``balanced'' monomial $x^{\lceil \frac{d}{2} \rceil}y^{\lfloor \frac{d}{2} \rfloor}$ (which is of generic type) is a typical form of almost real rank $d-2$.

For the remaining ranks, we induct on the degree $d$.
For the base cases $d = 5, 6$, we have that $d-2 = \lfloor \frac{d+2}{2} \rfloor$ is a typical almost real rank by the above.
For the inductive step, fix the following data:
\begin{enumerate}
    \item a rank $\lceil \frac{d+2}{2} \rceil \le r \le d-2$,
    \item a typical form $F \in \RR[x,y]_d$ of almost real rank $r$ (by perturbing $F$ if necessary, we may assume that $(F)^\perp = (F_\perp, F^\circ)$ is of generic type),
    \item a nonzero form $S := C_1 F_\perp + C_2 F^\circ \in (F)^\perp_r$ with almost real roots.
\end{enumerate}
We will exhibit a form $H$ of degree $d+1$ such that $(H)^\perp$ is of generic type, $(H)^\perp \subseteq (F)^\perp$, and $S \in (H)^\perp$.
By \Cref{lem:typicalForms} this shows that $H$ is a typical form of almost real rank $r$, so $r$ is a typical almost real rank in degree $d+1$.
This is enough for the induction, since we already know that $(d+1)-2$ is a typical almost real rank in degree $d+1$ (note also that $\lceil \frac{d+2}{2} \rceil = \lfloor \frac{(d+1)+2}{2} \rfloor$).
We consider two cases depending on the parity of $d$, namely $d = 2k$ for $k \ge 3$, or $d = 2k-1$ for $k \ge 4$.

First, suppose $d = 2k-1$ is odd, so that $\deg F_\perp = k$, $\deg F^\circ = k+1$.
We claim that there exists a linear form $L \in \RR[x,y]_1$ such that $C_1 - LC_2$ has a real root which is not a root of $LF_\perp + F^\circ$.
If not, then for every linear form $L$, we have that every root of $C_1 - LC_2$ is a root of $LF_\perp + F^\circ$.
Now for any $(a,b) \in \RR^2$ with $F_\perp(a,b) \ne 0$ and $C_2(a,b) \ne 0$, there exists a linear form $L$ such that $L(a,b) = \frac{C_1(a,b)}{C_2(a,b)}$, i.e. $(a,b)$ is a root of $C_1 - LC_2$.
By assumption $(a,b)$ is also a root of $LF_\perp + F^\circ$, so $L(a,b) = \frac{-F^\circ(a,b)}{F_\perp(a,b)}$.
Varying over such $(a,b)$, we see that the two rational functions $C_1/C_2$ and $-F^\circ/F_\perp$ agree at infinitely many points, hence must be equal.
But this implies that $S = C_1F_\perp + C_2F^\circ = 0$, a contradiction.
We conclude that such an $L$ exists. 
For such $L$, set $G := L F_\perp + F^\circ$, write $C_1 - LC_2 = L_1K$, where $L_1 \in \RR[x,y]_1$ does not divide $G$, and take $H$ to be the unique form of degree $d+1$ with apolar ideal generated by $(L_1F_\perp, G)$.
Then $(H)^\perp \subseteq (F)^\perp$, and $S = (C_1-L C_2)F_\perp + C_2 G = K(L_1F_\perp) + C_2 G \in (H)^\perp$ as desired.

The reasoning in the case $d = 2k$ is similar: here $\deg(F_\perp) = \deg(F^\circ) = k+1$.
We claim that there exists $\alpha \in \RR$ such that $C_1 - \alpha C_2$ has a real root which is not a root of $\alpha F_\perp + F^\circ$.
This follows from the same reasoning as in the case $d = 2k-1$ (in fact even simpler, since there is no choice involved in the scalar $\alpha$, as opposed to a linear form).
Having obtained such an $\alpha$, we set $G := \alpha F_\perp + F^\circ$, write $C_1 - \alpha C_2 = L_0 K$, where $L_0 \in \RR[x,y]_1$ does not divide $G$, and take $H$ to be the unique form of degree $d+1$ with apolar ideal generated by $(L_0F_\perp, G)$.
Then as before, $(H)^\perp \subseteq (F)^\perp$ and $S = (C_1 - \alpha C_2)F_\perp + C_2G = K(L_0 F_\perp) + C_2G \in (H)^\perp$.
\end{proof}


\bibliographystyle{acm}
\bibliography{Hankel_index_almost_real_rank.bbl}

\end{document}